\newtheorem{theorem}{Theorem}[section]
\newtheorem{proposition}[theorem]{Proposition}
\newtheorem{lemma}[theorem]{Lemma}
\theoremstyle{definition}
\newtheorem{definition}[theorem]{Definition}
\newtheorem{example}[theorem]{Example}
\DeclareMathOperator\Ln{Ln}
\DeclareMathOperator\Arg{Arg}
\def\eps{\varepsilon}
\DeclarePairedDelimiter\floor{\lfloor}{\rfloor}
\begin{document}
	
\title{Ergodic approach to the study of boundedness of solutions of one type of the first-order semilinear difference equations}
\author{Andrii Chaikovskyi, Oleksandr Liubimov}
\date{September 14, 2025}
\maketitle

\begin{abstract}
\noindent We investigate the sufficient conditions for boundedness of one type of difference equations of the form $x(n+1)=ax(n)+f(x(n)) + y(n), \;\\  n\geq 1$ in critical case $|a|=1$. 

For this equation the following assumptions are introduced:

\begin{itemize}
    \item The function $f: \mathbb{C} \to \mathbb{C}$ and the input sequence $\{y(n)\}_{n\geq1}$ are assumed to be bounded.

    \item $\text{Re}\left(\overline{f(\rho \, e^{2\pi i \theta})} \; \cdot ae^{2\pi i \theta}\right)$ converges uniformly on $[0,1) \ni \theta$ to some real-valued function $\Phi(\theta)$ as $\rho \to +\infty$ .
\end{itemize}

\noindent Combining the celebrated results of the probability and ergodic theory together with the geometric consideration of the problem, we show that under fairly general conditions this type of semilinear difference equations has all the solutions bounded.

Subsequently we formulate the quantitative version of our theorem and give the example of its application. 

In addition, in the last section we discuss the conditions of our main result and provide the constructions, which highlight the importance of these conditions.
\end{abstract}

\noindent \textbf{Keywords:} Semilinear difference equation, Ergodic theory, Qualitative analysis, Discrete dynamical systems.

\vspace{10pt}

\noindent \textbf{AMS 2020 Subject Classification:} 39A45, 37A99, 11J71.

\tableofcontents

\section{Introduction and statement of the main result}

In our paper we work with the semilinear difference equation of the form

\begin{equation}\label{difference_equation}
x(n+1)=ax(n)+f(x(n)) + y(n), \; n\geq 1
\end{equation}

\noindent on the complex plane $\mathbb{C}$. The case of our interest is the so-called \textit{critical case}: $|a|=1$. This case is rather interesting for the analysis, because unlike in case $|a|\neq1$ multiplication by $a$ in the term $ax(n)$ does not impact on the absolute value $|x(n)|$ and, therefore, boundedness of solutions of the equation \eqref{difference_equation} depends on the behavior of $f$ and $y(n)$ in a more complicated way than in case $|a|\neq1$. For example, provided that the function $f:\mathbb{C} \to \mathbb{C}$ and sequence $\{y(n)\}_{n\geq1}$ are bounded, one may show that in case $|a|<1$ all the solutions of \eqref{difference_equation} are bounded, and in case $|a|>1$ equation \eqref{difference_equation} has at most one bounded solution.

Before proceeding to the main result of this paper, we would like to discuss the existing approaches to the studying of qualitative(and/or quantitative) properties of the difference equations of form \eqref{difference_equation}. In our opinion, one of the most natural ways to analyze the properties of the equation \eqref{difference_equation} is to treat this equation as the \textit{nonlinear Volterra difference equation}. Indeed, iterating the recurrence relation, one can rewrite the difference equation \eqref{difference_equation} in a following way:

$$x(n)= a^nx(1)+ \sum_{j=1}^{n-1}a^{n-j}f(x_j)+\sum_{j=1}^{n-1}a^{n-j} y(j), \ \ n\geq 1$$

\noindent and obtain the Volterra difference equation. This type of equations was well studied by other authors and a great variety of different methods were developed to understand the evolution of such equations. These methods include, in particular, \textit{method of Lyapunov functions}, \textit{resolvent method}, \textit{fixed point theory methods} and \textit{transform methods}. For the comprehensive guide to this subject we highly recommend to refer to the book of Raffoul \cite{Raffoul_volterra_difference_equations} and all the bibliography present there. 

The counterpart of the equation \eqref{difference_equation} in Banach space with operator coefficient, namely the equation $x(n+1)=(\mathcal{A}x)(n) + f(x(n))+y(n), n\geq 1$ or, generally speaking, the equation $x(n+1)=(\mathcal{A}x)(n) + f_n(x(n))+y(n), n\geq 1$ was also well studied by other authors. One of the highly developed methods that exist in this area is the \textit{method of maximal regularity}. For the details on this approach we strongly recommend to refer to the monograph of Agarwal, Cuevas and Lizama \cite{Agarwal_Cuevas_Lizama_regularity_difference_eq_banach_spaces}. 

It is important to mention that all the approaches listed above usually deal with the functions $f$, which have some very nice and predictable in some sense behavior. For example, the function $f$ is often assumed to be continuous or Lipschitz continuous, small in some sense, etc. Although we also assume the function $f$ to have some nice properties, we, on the other hand, we additionally permit it to partially have quite \text{"chaotic"} behavior. In order to handle the \text{"chaotic"} part of behavior of the function $f$ we will use various techniques of the ergodic and probability theory.

Now let us move to the main result of our paper. In case we deal with there exists some $\varphi \in [0,1)$ such that $a=e^{2\pi i\varphi}$. For real $t$ we set as usual $e(t) := e^{2\pi i t}$ and the equation \eqref{difference_equation} can be rewritten in the following way:

\begin{equation}\label{equation_with_rotation}
x(n+1)=e(\varphi)x(n)+f(x(n)) + y(n), \; n\geq 1
\end{equation}

The main result of our paper is the following theorem.

\begin{theorem}[\textbf{Main Theorem}]\label{main_theorem}
    If $\varphi$ is irrational, function $f: \mathbb{C} \to \mathbb{C}$ and sequence $\{y(n)\}_{n \geq 1} \subset \mathbb{C}$ are bounded and, in addition, the following conditions hold:

    \begin{enumerate}
        \item There exists a function $\Phi : [0,1)\to \mathbb{R}$ such that

        $$\sup_{\theta \in [0,1)} \left \vert \Phi(\theta)-\Re \left(\overline{f(\rho \, e(\theta))} \; \cdot e(\varphi + \theta)\right)\right\vert \to 0, \; \rho \to +\infty$$

        \noindent where $\Re(z)$ denotes the real part of $z \in \mathbb{C}$ as usual.
        
        \item $\Phi$ is Riemann integrable on $[0,1)$ or, equivalently, for Lebesgue almost all $\theta \in [0, 1)$ we have 
        
        $$\lim_{\delta \to 0} \Phi(\theta+\delta) = \Phi(\theta)$$

        \item 

        There exists some constant $D_0 \in \mathbb{N}$ such that for all $n_2>n_1\geq 1$ with $n_2 - n_1 \geq D_0$ we have 
        $$\frac{1}{n_2 - n_1}\left\vert\sum_{n=n_1}^{n_2 - 1}y(n)e(-n\varphi)\right\vert + \int_{[0,1)} \Phi(s) \; d s \;  \leq \beta$$

        where $\beta < 0$ is some real constant.
    \end{enumerate}

    \vspace{10pt}

    \noindent Then for any $x(1) \in \mathbb{C}$ the solution $\{x(n)\}_{n \geq 1}$ of the difference equation 

    $$x(n+1)=e(\varphi)x(n)+f(x(n)) + y(n), \; n\geq 1$$

    \noindent is bounded.
    
\end{theorem}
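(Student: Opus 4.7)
The plan is to pass to polar coordinates and establish a drift estimate showing that $|x(n)|$ strictly decreases on average whenever it lies above a sufficiently large threshold, thereby ruling out escape to infinity. Throughout, let $M := \sup_{z \in \mathbb{C}}|f(z)| + \sup_{n \geq 1}|y(n)|$, which is finite by the boundedness hypotheses.

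\textbf{Step 1 (increment of the modulus).} Writing $x(n) = \rho_n e(\theta_n)$ with $\rho_n = |x(n)|$ and squaring the recursion yields
\begin{equation*}
\rho_{n+1}^2 - \rho_n^2 = 2\Re\bigl(\overline{e(\varphi)x(n)}\,[f(x(n)) + y(n)]\bigr) + |f(x(n)) + y(n)|^2.
\end{equation*}
Dividing by $\rho_{n+1} + \rho_n$ and using the trivial bound $|\rho_{n+1}-\rho_n|\leq M$, I obtain, for $\rho_n$ large,
\begin{equation*}
\rho_{n+1} - \rho_n = \Re\bigl(\overline{f(\rho_n e(\theta_n))}\,e(\varphi + \theta_n)\bigr) + \Re\bigl(\overline{y(n)}\,e(\varphi + \theta_n)\bigr) + O(\rho_n^{-1}).
\end{equation*}
Hypothesis (1) then replaces the first summand by $\Phi(\theta_n) + o_{\rho_n \to \infty}(1)$ uniformly in $\theta_n$.

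\textbf{Step 2 (control of the angle).} When $\rho_n$ is large, $x(n+1) = e(\varphi)x(n) + O(1)$ forces $\theta_{n+1} = \theta_n + \varphi + O(\rho_n^{-1}) \pmod 1$. Hence on any block $[n_1, n_2]$ of length $N := n_2 - n_1$ throughout which $\rho_n \geq R$, the orbit $\{\theta_n\}$ stays within $O(N/R)$ of the arithmetic progression $\theta_{n_1} + (n - n_1)\varphi \pmod 1$.

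\textbf{Step 3 (block drift estimate).} Summing Step 1 over such a block gives
\begin{equation*}
\rho_{n_2} - \rho_{n_1} = \sum_{n=n_1}^{n_2 - 1}\Phi(\theta_n) + \sum_{n=n_1}^{n_2 - 1}\Re\bigl(\overline{y(n)}\,e(\varphi + \theta_n)\bigr) + N\cdot\eta(R),
\end{equation*}
with $\eta(R) \to 0$ as $R \to \infty$. Since $\varphi$ is irrational and $\Phi$ is Riemann integrable by hypothesis (2), Weyl's uniform equidistribution theorem yields $\frac{1}{N}\sum \Phi(\theta_n) = \int_0^1 \Phi(s)\,ds + o(1)$ as $R, N \to \infty$, where the angular drift from Step 2 is harmless because Riemann integrable functions admit upper/lower step function approximations with arbitrarily close integrals. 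For the $y$-sum, substituting $\theta_n = \theta_{n_1} + (n - n_1)\varphi + O(N/R)$ factors out a constant phase and, after absorbing the small phase error into $\eta$, leaves $\Re\bigl(e(\text{const})\sum_{n=n_1}^{n_2 - 1} y(n) e(-n\varphi)\bigr)$, whose absolute value is dominated by $\bigl|\sum y(n)e(-n\varphi)\bigr|$. Invoking hypothesis (3) with $N \geq D_0$ gives $N^{-1}(\rho_{n_2} - \rho_{n_1}) \leq \beta + o(1)$, hence $\rho_{n_2} - \rho_{n_1} \leq \tfrac{\beta}{2}N < 0$ for all $R$ sufficiently large.

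\textbf{Step 4 (conclusion and main obstacle).} Fix $R$ so large that the Step 3 estimate holds whenever $\rho_n \geq R$ throughout a block of length $D_0$. If $\rho_n$ ever exceeds $R + M D_0$, then, since a single step moves $\rho$ by at most $M$, the preceding $D_0$ values all exceed $R$; Step 3 then forces $\rho_n < \rho_{n - D_0}$, contradicting a first-passage time above $R + M D_0$. Hence $\rho_n \leq R + M D_0$ for all $n$. The main obstacle lies in Step 3: one must simultaneously justify that the angular perturbation $O(N/R)$ does not spoil the Weyl average for a merely Riemann integrable $\Phi$, and that the phase-twisted $y$-sum is effectively controlled by the untwisted quantity appearing in hypothesis (3). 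This dictates the order of quantifiers — first fix $\beta$ and $D_0$, then choose $R$ large, then restrict $N$ to multiples of $D_0$ so all error terms stabilize below $|\beta|/2$.
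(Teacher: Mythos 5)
Your overall strategy is sound and it genuinely diverges from the paper's route at the key step. Where the paper invokes the Birkhoff pointwise ergodic theorem together with the maximal ergodic theorem and an Egorov-type lemma --- producing exceptional sets $S_1,S_2,S_3$ and forcing a detour in which one waits up to $N_d$ steps for the argument to land $\delta^*$-close to a good point $\alpha_0\in S_2\cap S_3$ --- you use Weyl equidistribution of $(\theta_0+n\varphi)_n$ directly. That works and is arguably cleaner: the discrepancy of the shifted Kronecker sequence is independent of $\theta_0$, so sandwiching the Riemann integrable $\Phi$ between step functions gives a Weyl average that converges to $\int\Phi$ \emph{uniformly} in the starting angle, and fattening those step functions by the angular drift $\delta=O(N/R)$ costs only $O(J\delta)$ in the integral ($J$ the number of jumps). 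This removes the exceptional sets and the $\alpha_0$-approximation entirely. Steps 1 and 2 coincide with the paper's Lemmas on modulus change and argument change, and your treatment of the $y$-sum (factor out a constant phase, absorb the $O(N/R)$ phase error, bound by the untwisted modulus) is exactly the paper's.

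There is, however, one concrete error in the quantifier structure, concentrated in Step 4 and in your closing sentence. The block-drift estimate of Step 3 requires the block length $N$ to be large enough that the equidistribution of $(n\varphi)$ has taken hold --- i.e.\ $N\geq N_1(\varepsilon)$ where $N_1$ depends on the discrepancy of $(n\varphi)$ and on the step-function approximation of $\Phi$ --- and this $N_1$ has nothing to do with $D_0$, which only governs the $y$-sum in hypothesis (3). Step 4 as written applies the drift estimate to blocks of length exactly $D_0$; if $D_0$ is small (say $D_0=1$) the Weyl average over such a block tells you nothing, and the argument fails. Moreover your stated order ``first choose $R$ large, then restrict $N$'' is backwards: the angular drift is $O(N/R)$, so the block length must be \emph{fixed} before $R$ is chosen. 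The correct order is: fix $\varepsilon$ and the step functions; fix a block length $N^*\geq\max(D_0,N_1(\varepsilon))$; then choose $R$ so large that $(F+Y)N^*/R$ is small, that $R\geq\rho(\varepsilon)$ from hypothesis (1), and that the $O(1/R)$ errors of Step 1 summed over the block are below $\varepsilon N^*$; finally run the first-passage argument with threshold $R+MN^*$ (also taken $\geq|x(1)|$) and block length $N^*$ in place of $D_0$. With that repair --- which is routine --- your proof goes through; this is essentially the role the paper's $N_d$ and $H$ play, chosen in that same order.
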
 

\begin{example}
    In the equation \eqref{equation_with_rotation} let $\varphi$ be an arbitrary irrational number, $y(n):=\frac{1}{\sqrt{n}}e(n\varphi), n\geq1$ and let the function $f: \mathbb{C} \to \mathbb{C}$ be defined as follows:

    \begin{equation*}
    f(z):=\left\{
    \begin{array}{@{}rl@{}}
         0 & \text{if $z=0$} \\
         
        (-1 + i \sin (|\Arg z|^{10}) ) \cdot \frac{z}{|z|}e(\varphi) & \text{if $|z|\neq 0$ and $\Arg z \in [0,10^{-2025}]$} \\

        \left(\frac{1}{\log\log\log(2+|z|)} + i \sin (3^{|z|})\right)\cdot \frac{z}{|z|}e(\varphi) & \text{if $|z|\neq 0$ and $\Arg z \notin [0,10^{-2025}]$}
         
    \end{array}
    \right .
\end{equation*}

\noindent Here $\Arg z$ stands for the principal branch of the argument $\arg z$ and note that we choose the range of $\Arg z$ to be $[0, 2\pi)$. Theorem \ref{main_theorem} tells us that every solution of the equation \eqref{equation_with_rotation} is bounded, which is not really obvious.

\vspace{10pt}

Consult \hyperref[sec:quantitative_version_of_main_theorem]{section 4} for the quantitative version of theorem \ref{main_theorem} and consult \hyperref[sec:discussion_of_conditions_of_main_theorem]{section 6} for the discussion about importance of the conditions present in its formulation provided with all the appropriate counterexamples.
    
\end{example}

\section{Auxiliary results}

We will work with the circle $\mathbb{T} = \mathbb{R}/\mathbb{Z} = \{x+\mathbb{Z} \; | \; x\in \mathbb{R}\}$. In fact, $\mathbb{T}$ is the set of real numbers mod $1$. Topology on $\mathbb{T}$ is the quotient topology induced by the usual topology on $\mathbb{R}$. This topology can also be given by the metric 

$$d_{\mathbb{T}}(a, b) = \text{dist}(a+\mathbb{Z}, b+\mathbb{Z}):= \min_{m\in\mathbb{Z}}|a-b+m|$$

\noindent which is actually a length of the shortest arc on $\mathbb{T}$ with endpoints $a, b \mod 1$.

With this topology $\mathbb{T}$ becomes a compact abelian group. The Haar measure $m_{\mathbb{T}}$ on it is the measure on reals mod $1$ induced by the ordinary Lebesgue measure. We will denote $\lambda=m_{\mathbb{T}}$. $\lambda$ is a \textit{nonnegative regular normed Borel measure in $\mathbb{T}$}, that is, $\lambda(\mathbb{T})=1$ and $\lambda(E)=\sup\{\lambda(C) \; : \; C \subset E, \; C \; is \; closed\}=\inf\{\lambda(U) \; : \; E \subset U, \; U \; is \; open\}$ for all Borel sets $E$ in $\mathbb{T}$. In addition, $\lambda$ is a complete measure: all the subsets of  measure zero sets are measurable.

For the details of the theory of topological groups you may, for instance, refer to the book of Kuipers and Niederreiter \cite{Kuipers_Niederreiter_Uniform_Distribution} and to the book of Einsiedler and Ward \cite{Einsiedler_Ward_ergodic_theory}.

We remark that the functions $f:\mathbb{T} \to \mathbb{R}$ may be also considered as the functions $f:[0,1) \to \mathbb{R}$ which are $1$-periodically continued to the 1-periodic function $\tilde{f}:\mathbb{R} \to \mathbb{R}$. It means that for $x \in \mathbb{T}$ we have $f(x) = \tilde{f}(\{x+\mathbb{Z}\})=\tilde{f}(x \mod 1)$.

The following lemma is actually a continuous variant of the theorem of Egorov. The argument in its proof completely replicates the argument used in proof of the original theorem.

\begin{lemma}\label{almost_continuous_measurable_function_lemma}
    Let $y_0 \in \mathbb{R}$ and $f:\mathbb{T} \times\mathbb{R}_{\geq 0} \to \mathbb{R}$ be a function with the following properties:

    \begin{enumerate}
        \item For all $y$ in some neighborhood of $y_0$ function $f(\; \cdot \;,y):\mathbb{T} \to \mathbb{R}$ is measurable.
        
        \item For $\lambda$-almost all $x \in \mathbb{T}$ the limit $\lim_{y\to y_0} f(x, y) =: F(x)$ exists.
    \end{enumerate}

    \noindent Then for arbitrary $\varepsilon > 0$ there exists a set $A \subset \mathbb{T}$ with $\lambda(A) < \varepsilon$ such that 

    $$\sup_{x\in\mathbb{T}\setminus A} |f(x,y)-F(x)| \to 0, \; y\to y_0$$
    \end{lemma}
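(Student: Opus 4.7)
Our approach is to transcribe the classical (sequential) Egorov argument into the continuous-parameter setting, replacing the ``tails'' $\bigcup_{m \ge n} \{|f_m - F| \ge 1/k\}$ by their continuous counterparts indexed by a shrinking neighborhood of $y_0$. Fix $\varepsilon > 0$. Since $F(x) = \lim_m f(x, y_0 + 1/m)$ for $\lambda$-a.e.\ $x$ by hypothesis~2, $F$ is measurable as an a.e.\ pointwise limit of measurable functions.

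For $n, k \in \mathbb{N}$, define
\[
E_{n,k} := \left\{ x \in \mathbb{T} \ :\ \sup_{0 < |y - y_0| < 1/n} |f(x,y) - F(x)| \ge 1/k \right\}.
\]
For fixed $k$, the family $\{E_{n,k}\}_n$ is decreasing in $n$, and if $x \in \bigcap_n E_{n,k}$ then one may pick $y_n \to y_0$ with $|f(x, y_n) - F(x)| \ge 1/k$, which forces $x$ into the $\lambda$-null exceptional set where the defining limit of $F$ fails. Granted measurability of $E_{n,k}$, continuity of $\lambda$ from above gives $\lambda(E_{n,k}) \to 0$ as $n \to \infty$; choose $n_k$ with $\lambda(E_{n_k,k}) < \varepsilon/2^k$ and set $A := \bigcup_{k \ge 1} E_{n_k, k}$, so that countable subadditivity yields $\lambda(A) < \varepsilon$.

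Uniform convergence on $\mathbb{T} \setminus A$ then follows by construction: for every $x \notin A$ and every $k \ge 1$, the membership $x \notin E_{n_k,k}$ forces $|f(x,y) - F(x)| < 1/k$ for all $y$ with $0 < |y - y_0| < 1/n_k$; hence $\sup_{x \in \mathbb{T} \setminus A} |f(x,y) - F(x)| \le 1/k$ for those $y$, which is precisely the claimed uniform convergence as $y \to y_0$.

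The delicate point --- and the step I expect to be the main obstacle --- is the measurability of the $E_{n,k}$: their definition invokes an uncountable supremum in $y$, whereas the hypotheses grant only pointwise-in-$y$ measurability of $f$. The standard workaround is to replace $\sup_{0 < |y - y_0| < 1/n}$ by a supremum over a countable dense subset $Q$ of the punctured neighborhood of $y_0$; the resulting sets $E_{n,k}^Q$ are evidently measurable and retain the null-intersection property by hypothesis~2. Provided $f$ has enough regularity in $y$ to transfer bounds from $Q$ to all of $\mathbb{R}$ near $y_0$ (a property automatic in the later applications to Theorem~\ref{main_theorem}), the entire argument runs verbatim with $E_{n,k}^Q$ in place of $E_{n,k}$ and produces the required set $A$; otherwise one passes to measurable envelopes using completeness of $\lambda$ and repeats the same chain of inequalities.
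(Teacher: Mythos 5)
Your skeleton is exactly the standard Egorov scheme the paper has in mind (the paper offers no written proof of this lemma, only the remark that the argument replicates Egorov's), and the reduction to the sets $E_{n,k}$, the choice of $n_k$ with $\lambda(E_{n_k,k})<\varepsilon/2^k$, and the final uniform estimate on $\mathbb{T}\setminus A$ are all correct \emph{granted} measurability of the $E_{n,k}$. You are also right that this measurability is the crux; the problem is that neither of your two proposed fixes closes it under the stated hypotheses. The countable-dense-subset version $E_{n,k}^Q$ only yields $|f(x,y)-F(x)|<1/k$ for $y\in Q$, and the lemma assumes no regularity of $f$ in $y$ whatsoever, so nothing transfers the bound to all $y$ near $y_0$. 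The measurable-envelope fallback fails for a different reason: if $E_{n,k}^*$ is a measurable hull of $E_{n,k}$, then $\lambda(E_{n,k}^*)=\lambda^*(E_{n,k})$, and outer measure is \emph{not} continuous from above along a decreasing sequence of nonmeasurable sets, so the fact that $\bigcap_n E_{n,k}$ is null does not force $\lambda^*(E_{n,k})\to 0$. Thus the step ``continuity of $\lambda$ from above gives $\lambda(E_{n,k})\to 0$'' is precisely where the argument can break. This is not pedantry: under the axiom of choice one can take $f(x,y)=\mathbbm{1}_{\{x=\psi(y)\}}$ with $\psi$ injective and with $\psi$ of every punctured neighborhood of $y_0$ of full outer measure; then each $f(\cdot,y)$ is measurable and $f(x,y)\to 0$ for every $x$, yet no set $A$ with $\lambda(A)<1/2$ can satisfy the conclusion. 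So some input beyond hypotheses 1 and 2 is genuinely needed.

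What saves the lemma where the paper actually uses it (part 1 of Theorem \ref{ergodic_properties}, with $f(\theta,\delta)=\Phi(\theta+\delta)$) is that there the uncountable suprema \emph{are} measurable: $\{\theta:\sup_{|\delta|<1/n}\Phi(\theta+\delta)>t\}=\{\Phi>t\}+(-1/n,1/n)$ is a union of open translates of an interval, hence open, so $g_n(\theta):=\sup_{0<|\delta|<1/n}|\Phi(\theta+\delta)-\Phi(\theta)|$ is measurable, tends to $0$ exactly at the continuity points of $\Phi$ (almost every point, by Riemann integrability), and classical Egorov applied to the sequence $g_n$ finishes the proof. To make your write-up sound, either add a structural hypothesis (e.g.\ joint measurability of $f$, which makes $\{g_n>t\}$ the projection of a Borel set, hence analytic and therefore $\lambda$-measurable by completeness; or directly assume measurability of the $g_n$), or prove the lemma only in the concrete form needed for the application.
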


    Now we would like to provide a reader with the important notions and results from \textit{ergodic theory}. The wonderful reference to study this subject would be the book of Manfred Einsiedler and Thomas Ward \cite{Einsiedler_Ward_ergodic_theory}. The detailed proofs of all the statements written below can be found in this book.

    \begin{definition}
        Let $X$ be a compact Hausdorff space and $\mathfrak{B}(X)$ be a Borel $\sigma$-algebra of open sets of $X$. For measurable map $T:X\to X$ Borel measure $\mu$ on $X$ is said to be $T$\textit{-invariant} if for any $A \in \mathfrak{B}(X)$ we have $\mu(T^{-1}A)=\mu(A)$. In this case $(X, \mathfrak{B}(X), \mu, T)$ is called a \textit{measure-preserving system} and $T$ a \textit{measure-preserving transformation}.
    \end{definition}

    \begin{definition}
        A measure-preserving transformation $T:X \to X$ of a probability space $(X, \mathfrak{B}(X), \mu)$ is \textit{ergodic} if for any $A \in \mathfrak{B}(X)$:
        $$T^{-1}A=A \; \Longrightarrow \mu(A)=0 \; or \; \mu(A)=1$$
    \end{definition}

    \begin{proposition}\label{proposition_ergodic_rotation}
        The circle rotation $R_{\varphi}:\mathbb{T}\to \mathbb{T}, \; R_{\varphi}(x)=x+\varphi \mod 1$ is ergodic with respect to the Lebesgue measure $\lambda$ if and only if $\alpha$ is irrational.
    \end{proposition}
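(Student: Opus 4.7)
The plan is to prove both directions using Fourier analysis on $\mathbb{T}$, relying on the fact that $\{e_n(x) := e^{2\pi i n x}\}_{n \in \mathbb{Z}}$ is an orthonormal basis of $L^2(\mathbb{T}, \lambda)$.

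For the forward direction, I would argue the contrapositive: assume $\varphi = p/q$ is rational and exhibit a non-trivial invariant set. The cleanest route is to observe that the function $g(x) = e^{2\pi i q x}$ satisfies $g(R_\varphi x) = g(x)$ everywhere since $e^{2\pi i q \varphi} = e^{2\pi i p} = 1$. Then for instance $A = \{x \in \mathbb{T} : \Re g(x) > 0\}$ is a Borel set with $R_\varphi^{-1} A = A$ and $\lambda(A) = 1/2$, so $R_\varphi$ is not ergodic. Alternatively one can directly exhibit $A = \bigcup_{k=0}^{q-1}[kp/q \bmod 1, \, kp/q + 1/(2q) \bmod 1)$, but the Fourier-style argument above is more uniform with the other direction.

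For the reverse direction, suppose $\varphi$ is irrational and $A \in \mathfrak{B}(\mathbb{T})$ satisfies $R_\varphi^{-1} A = A$. Then $\mathbf{1}_A \in L^2(\mathbb{T}, \lambda)$ and $\mathbf{1}_A \circ R_\varphi = \mathbf{1}_A$ pointwise (hence in $L^2$). Expand in Fourier series $\mathbf{1}_A = \sum_{n \in \mathbb{Z}} c_n e_n$ with $c_n = \int_\mathbb{T} \mathbf{1}_A(x) e^{-2\pi i n x}\, d\lambda(x)$. A direct computation gives $\mathbf{1}_A \circ R_\varphi = \sum_n c_n e^{2\pi i n \varphi} e_n$. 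Uniqueness of Fourier coefficients then yields $c_n (e^{2\pi i n \varphi} - 1) = 0$ for every $n \in \mathbb{Z}$. Because $\varphi$ is irrational, $e^{2\pi i n \varphi} \neq 1$ for every $n \neq 0$, forcing $c_n = 0$ for $n \neq 0$. Therefore $\mathbf{1}_A = c_0$ almost everywhere, and since $\mathbf{1}_A$ takes only the values $0$ and $1$, we conclude $\lambda(A) = c_0 \in \{0, 1\}$.

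There is no real obstacle here; the only mildly delicate point is being careful that the invariance $R_\varphi^{-1}A = A$ (set equality, which is what the definition of ergodicity requires) indeed yields $\mathbf{1}_A \circ R_\varphi = \mathbf{1}_A$ identically, so that the $L^2$ Fourier identity is justified without any almost-everywhere hedging. Once this is in place, the argument is a three-line computation.
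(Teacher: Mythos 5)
Your proof is correct. The paper itself does not prove this proposition --- it defers to Einsiedler--Ward --- and your Fourier-analytic argument (invariance forces $c_n(e^{2\pi i n\varphi}-1)=0$, hence $\mathbf{1}_A$ is a.e.\ constant when $\varphi$ is irrational, plus the explicit invariant set of measure $1/2$ in the rational case) is exactly the standard proof found there; your care about strict invariance $R_\varphi^{-1}A=A$ matches the paper's definition of ergodicity, and the ``$\alpha$'' in the statement is just a typo for $\varphi$, which you have implicitly corrected.
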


    \noindent By $\mathscr{L}^1_\mu(X)$ we denote the set of all measurable functions $g:X\to\mathbb{R}$ such that $\int_X|g|\;d\mu < \infty$. In this case $L^1$ norm of $g$ is defined as $\Vert g\Vert_1 := \int_X|g|\;d\mu$.
    For natural number $n$ notation $T^{n}$ stands for $n$ compositions of transform $T$, $T^{0}x:=x$

    \begin{theorem}[Maximal Ergodic Theorem]
        Consider the measure-preserving system $(X, \mathfrak{B}(X), \mu, T)$ on a probability space and $g$ a real-valued function in $\mathscr{L}^1_\mu(X)$. For any $\alpha \in \mathbb{R}$ define

        $$E_{\alpha}=\left\{x \in X \; : \; \sup_{N\geq1} \frac{1}{N} \sum_{n=0}^{N-1}g(T^{n}x) > \alpha\right\}$$

        \noindent Then

        $$\alpha\mu(E_{\alpha})\leq\int_{E_{\alpha}}g\;d\mu \leq \Vert g \Vert_1$$

        \noindent Moreover, $\alpha\mu(E_{\alpha} \cap A)\leq \int_{E_{\alpha} \cap A}g\;d\mu$ whenever $T^{-1}A=A$.
    \end{theorem}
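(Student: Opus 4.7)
The plan is to establish the classical maximal inequality, reducing immediately to the case $\alpha = 0$ and then exploiting a telescoping pointwise bound together with $T$-invariance. Replacing $g$ by $g - \alpha$ (still in $\mathscr{L}^1_\mu(X)$), the set $E_\alpha$ becomes exactly the set where $\sup_{N \geq 1}\frac{1}{N}\sum_{n=0}^{N-1}(g-\alpha)(T^n x) > 0$, so it suffices to show the following for any $h \in \mathscr{L}^1_\mu(X)$: writing $S_N(x) := \sum_{n=0}^{N-1} h(T^n x)$ and $E := \{\sup_{N \geq 1} S_N/N > 0\} = \bigcup_{N \geq 1}\{S_N > 0\}$, one has $\int_E h\,d\mu \geq 0$. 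Unwinding the substitution then gives the inequality $\alpha\mu(E_\alpha) \leq \int_{E_\alpha} g\,d\mu$, while the upper bound $\int_{E_\alpha} g\,d\mu \leq \|g\|_1$ is immediate from $g \leq |g|$.

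The heart of the argument is the pointwise estimate $h(x) \geq M_N(x) - M_N(Tx)$ valid on the set $B_N := \{M_N > 0\} = \bigcup_{k=1}^N \{S_k > 0\}$, where the truncated maximum is defined by $M_N(x) := \max(0, S_1(x), \ldots, S_N(x)) \geq 0$. To prove this, I would use the recursion $S_k(x) = h(x) + S_{k-1}(Tx)$ (with $S_0 \equiv 0$): since $M_N(Tx) \geq S_{k-1}(Tx)$ for every $k \in \{1,\ldots,N\}$, one obtains $h(x) + M_N(Tx) \geq S_k(x)$ for $k = 1,\ldots,N$, and on $B_N$ the maximum of the right-hand side over $k$ is precisely $M_N(x)$. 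Integrating this inequality over $B_N$ and using that $M_N$ vanishes off $B_N$, is nonnegative, and satisfies $\int_X M_N \circ T\,d\mu = \int_X M_N\,d\mu$ by $T$-invariance, I obtain
$$\int_{B_N} h\,d\mu \;\geq\; \int_{B_N} M_N\,d\mu - \int_{B_N} M_N \circ T\,d\mu \;\geq\; \int_X M_N\,d\mu - \int_X M_N \circ T\,d\mu \;=\; 0.$$
Since $B_N \nearrow E$, dominated convergence (with dominating function $|h|$) then yields $\int_E h\,d\mu = \lim_N \int_{B_N} h\,d\mu \geq 0$, as required.

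Finally, the ``moreover'' part follows immediately by restriction: when $T^{-1}A = A$, the quadruple $(A, \mathfrak{B}(A) \cap A, \mu|_A, T|_A)$ is itself a measure-preserving system, and applying the first inequality inside it to $g|_A$ gives $\alpha\mu(E_\alpha \cap A) \leq \int_{E_\alpha \cap A} g\,d\mu$. The main obstacle, and the only really nontrivial step, is the combinatorial pointwise bound $h \geq M_N - M_N\circ T$ on $B_N$; once this telescoping estimate is in place, the rest of the proof is standard bookkeeping with $T$-invariance and dominated convergence.
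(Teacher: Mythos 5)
Your proof is correct. Note that the paper does not prove this theorem at all: it is quoted as a known result, with the reader referred to Einsiedler--Ward for the proof. What you have written is precisely the standard Garsia argument that appears there -- reduction to $\alpha=0$ via $g\mapsto g-\alpha$ (legitimate because $\mu$ is finite, so constants are integrable), the truncated maximal function $M_N$, the pointwise bound $h\geq M_N-M_N\circ T$ on $\{M_N>0\}$ obtained from $S_k(x)=h(x)+S_{k-1}(Tx)$, integration using $\int M_N\circ T\,d\mu=\int M_N\,d\mu$ together with $M_N\geq 0$ and $M_N=0$ off $B_N$, and passage to the limit $B_N\nearrow E_\alpha$ by dominated convergence. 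All of these steps check out.

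One small point worth tightening in the ``moreover'' clause: if $T^{-1}A=A$ with $0<\mu(A)<1$, the restricted system $(A,\mu|_A,T|_A)$ is measure-preserving but not a probability space, so you cannot literally ``apply the first inequality inside it'' as stated. This is harmless -- either normalize by $\mu(A)$ before applying the result and multiply back (the case $\mu(A)=0$ being trivial), or simply observe that your $\alpha=0$ argument never uses $\mu(X)=1$, only finiteness of the measure, which is what makes $g-\alpha$ integrable. You do need to note (as you implicitly do) that $T^{-1}A=A$ forces orbits of points of $A$ to remain in $A$, so the maximal set computed in the restricted system coincides with $E_\alpha\cap A$.
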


    \vspace{10pt}

    \noindent This theorem allows us to control the amount of time orbits of points spend on a set of small measure. 

    \begin{lemma}\label{lemma_control_of_time_spent_on_set_of_small_measure}
        Let $E \subset X$ be a set with $\mu(E)=\varepsilon^2$. Then there exists a set $S \subset X$, with $\mu(S) \leq \varepsilon$ such that for all $N \geq 1$ and all $x \in X \setminus S$ we have 
        $$\#\{n, \; 0\leq n \leq N-1 \; : \; T^{n}x \in E\} =  \sum_{n=0}^{N-1} \mathbbm{1}_E(T^{n}x) \leq \varepsilon N$$
    \end{lemma}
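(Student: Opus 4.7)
The plan is to apply the Maximal Ergodic Theorem directly to the indicator function of $E$. Set $g := \mathbbm{1}_E$; this is a nonnegative real-valued function in $\mathscr{L}^1_\mu(X)$ with $\Vert g \Vert_1 = \mu(E) = \varepsilon^2$. I would then take $\alpha := \varepsilon$ and define the exceptional set to be precisely the set furnished by the Maximal Ergodic Theorem, namely
$$S := E_\varepsilon = \left\{ x \in X : \sup_{N \geq 1} \frac{1}{N} \sum_{n=0}^{N-1} \mathbbm{1}_E(T^n x) > \varepsilon \right\}.$$

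The measure bound is then immediate: the Maximal Ergodic Theorem gives $\varepsilon\,\mu(S) \leq \Vert g \Vert_1 = \varepsilon^2$, hence $\mu(S) \leq \varepsilon$. On the other hand, for $x \in X \setminus S$ the defining inequality for $S$ fails, so
$$\sup_{N \geq 1} \frac{1}{N} \sum_{n=0}^{N-1} \mathbbm{1}_E(T^n x) \leq \varepsilon,$$
which upon multiplying by $N$ gives exactly the conclusion $\sum_{n=0}^{N-1}\mathbbm{1}_E(T^n x) \leq \varepsilon N$ for every $N \geq 1$.

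There is no real obstacle here; the lemma is essentially a restatement of the Maximal Ergodic Theorem in a form convenient for the forthcoming arguments, and the only substantive choice is the calibration $\alpha = \varepsilon$ matched with $\mu(E) = \varepsilon^2$, which is what produces the advertised $\mu(S) \leq \varepsilon$. (Trivial cases such as $\varepsilon \geq 1$ can be handled by taking $S = \varnothing$, since then the bound $\varepsilon N$ exceeds $N$ and the inequality is automatic.)
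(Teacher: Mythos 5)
Your argument is correct and is exactly the paper's proof: the authors likewise set $g:=\mathbbm{1}_E$, $\alpha:=\varepsilon$ in the Maximal Ergodic Theorem and take $S=E_\varepsilon$, using $\int_{E_\varepsilon}\mathbbm{1}_E\,d\mu\leq\mu(E)=\varepsilon^2$ to get $\mu(S)\leq\varepsilon$. You have merely written out the same one-line calibration in full detail.
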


    \begin{proof}
        Put $\alpha := \varepsilon$ and $g:=\mathbbm{1}_E$ in the Maximal Ergodic Theorem and use the inequality $\int_{E_{\alpha}} \mathbbm{1}_E \;d\mu \leq \int_{X} \mathbbm{1}_E \;d\mu = \mu(E)$.
    \end{proof}

    The next theorem is due to Birkhoff.

    \begin{theorem}[Pointwise Ergodic Theorem]\label{pointwise_ergodic_theorem}
        Let $(X, \mathfrak{B}(X), \mu, T)$ be a measure-preserving system on a probability space and $g \in \mathscr{L}^{1}_\mu (X)$. If $T$ is ergodic, then for $\mu$-almost all points $x\in X$ we have

        $$\lim_{N \to \infty} \frac{1}{N} \sum_{n=0}^{N-1} g(T^{n}x) = \int_{X}g \; d\mu$$
    \end{theorem}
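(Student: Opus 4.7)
The plan is to deduce the theorem from the Maximal Ergodic Theorem stated above by a squeeze argument that forces the $\limsup$ and $\liminf$ of the Cesàro averages to coincide almost everywhere. Given $g \in \mathscr{L}^1_\mu(X)$, I would first define
$$\bar{g}(x) := \limsup_{N \to \infty} \frac{1}{N} \sum_{n=0}^{N-1} g(T^n x), \qquad \underline{g}(x) := \liminf_{N \to \infty} \frac{1}{N} \sum_{n=0}^{N-1} g(T^n x),$$
and verify that both are $T$-invariant $\mu$-a.e. The key identity is the telescoping
$$\frac{1}{N}\sum_{n=0}^{N-1} g(T^n (Tx)) - \frac{1}{N}\sum_{n=0}^{N-1} g(T^n x) = \frac{g(T^N x) - g(x)}{N},$$
whose right-hand side tends to zero $\mu$-a.e.\ by a standard Borel--Cantelli argument: for each $\varepsilon > 0$, $T$-invariance of $\mu$ gives $\mu\{x : |g(T^N x)| > \varepsilon N\} = \mu\{|g| > \varepsilon N\}$, and the layer-cake inequality makes $\sum_N \mu\{|g|>\varepsilon N\}$ finite since $g \in \mathscr{L}^1_\mu$.

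The heart of the proof is then the squeeze. For each pair of rationals $p < q$ I would set $E_{p,q} := \{x \in X : \underline{g}(x) < p < q < \bar{g}(x)\}$; by the previous step this set is $T$-invariant modulo null sets. Applying the refined form of the Maximal Ergodic Theorem to the function $g - q$ with threshold $0$ and invariant set $A = E_{p,q}$, and observing that $E_{p,q}$ is contained in $\{x : \sup_{N\geq 1} \frac{1}{N}\sum_{n=0}^{N-1}(g - q)(T^n x) > 0\}$, yields $q\,\mu(E_{p,q}) \leq \int_{E_{p,q}} g\, d\mu$. Applying it analogously to $p - g$ gives the reverse bound $\int_{E_{p,q}} g\, d\mu \leq p\,\mu(E_{p,q})$. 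Since $q > p$, combining these forces $\mu(E_{p,q}) = 0$. Taking the countable union over pairs $p < q$ in $\mathbb{Q}$ then gives $\bar{g} = \underline{g}$ $\mu$-a.e., so the Cesàro averages converge $\mu$-a.e.\ to a $T$-invariant function $g^*$; by ergodicity of $T$, $g^*$ is constant $\mu$-a.e., say equal to some $c \in \mathbb{R}$.

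It remains to identify $c$ with $\int_X g\, d\mu$. For bounded $g$ this is immediate from the bounded convergence theorem applied to the averages $A_N g(x) := \frac{1}{N}\sum_{n=0}^{N-1} g(T^n x)$, each of which integrates to $\int_X g\, d\mu$ by $T$-invariance of $\mu$. For a general $g \in \mathscr{L}^1_\mu$, I would approximate $g$ in $L^1$-norm by a bounded truncation $g_M$ and exploit the fact that every $A_N$ is an $L^1$-contraction (again by $T$-invariance), passing diagonally to the limit in both $N$ and $M$. The step I expect to be the main obstacle is precisely this last $L^1$ identification: the pointwise convergence itself is handed to us cleanly by the Maximal Ergodic Theorem via the squeeze, but neither dominated nor monotone convergence applies directly to $A_N g$ for $g$ only in $\mathscr{L}^1_\mu$, so one has to invoke the maximal inequality a second time to control the tails of $|A_N g - c|$ uniformly in $N$ before exchanging the limit with the integral.
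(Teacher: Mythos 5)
The paper does not prove this theorem at all: it is quoted as Birkhoff's classical result with the proof deferred to the cited book of Einsiedler and Ward, and your argument is exactly the standard derivation from the Maximal Ergodic Theorem given there (invariance of the upper and lower averages via telescoping, the squeeze over rational pairs $p<q$ applied to $g-q$ and $p-g$ on the invariant set $E_{p,q}$, then identification of the constant), so it is correct. One remark on the step you flag as the main obstacle: because $T$ is ergodic the a.e.\ limit is a constant $c$, so Fatou's lemma applied to the averages of $|g-g_M|$ already yields $\left\vert c-\int_X g_M\,d\mu\right\vert \leq \Vert g-g_M\Vert_1$, whence $c=\int_X g\,d\mu$ as $M\to\infty$ --- no second appeal to the maximal inequality is needed in the ergodic case (it would be needed only to upgrade to $L^1$-convergence or to identify the limit with a conditional expectation in the non-ergodic setting).
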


	The next theorem provides us with the properties of the Riemann integrable functions, which will be useful in the proof of the main theorem.

    \begin{theorem}[Ergodic properties of a function]\label{ergodic_properties}
    		Let function $\Phi : [0,1)\to [0,1)$ be Riemann integrable on $[0,1)$ or, equivalently, for Lebesgue almost all $\theta \in [0, 1)$ we have 
    		
    		$$\lim_{\delta \to 0} \Phi(\theta+\delta) = \Phi(\theta).$$
    		
    		\noindent In addition, let
    		$$ \Omega:=\sup_{t\in [0,1)}|\Phi(t)|.$$
    		
    	If $\varepsilon \in (0, 1)$, then:
    	 	
    	1) there exists a set $S_1 \subset \mathbb{T}$ of small measure $\lambda(S_1) < \varepsilon^2$ and $\delta^{*} = \delta^{*}(\varepsilon) > 0$ such that for all $\delta \in (-\delta^{*}, \delta^{*})$ and all $\theta \in \mathbb{T} \setminus S_1$ we have
    	
    	\begin{equation}\label{pointwise_continuity_of_Phi}
    		\left\vert \Phi(\theta+\delta) -\Phi(\theta)\right\vert < \eps
    	\end{equation}
    	
    	2) there exists a set $S_2 \subset \mathbb{T}$ of big measure $\lambda(S_2) > 1 - \eps$ such that for all $N \geq 1$ and all $\theta \in S_2$ we have
    	
    	\begin{equation}\label{estimation_of_time_on_bad_set}
    		\#\{n, \; 0\leq n \leq N-1 \; : \; R^{n}_{\varphi}\theta \in S_1 \} \leq \eps N.
    	\end{equation}

		3)  there exists a set $S_3 \subset \mathbb{T}$ of big measure $\lambda(S_3) > 1 - \varepsilon$ and some $N_0$ such that for all $\theta \in S_3,\ N\geq N_0$ we have
		$$\left\vert \frac{1}{N}\sum_{n=0}^{N-1} \Phi(R_{\varphi}^{n}\theta) - \int_{\mathbb{T}} \Phi \; d \lambda\right\vert < \varepsilon.$$
			
		4) if $\alpha_0 \in S_2\cap S_3$ and $t_k \in \mathbb{T},\ 0 \leq k \leq N - 1$ are such that
		$$ d_\mathbb{T}\left(t_k, \; R_{\varphi}^k\alpha_0\right) < \delta^{*}(\eps), \; 0 \leq k \leq N - 1,$$
		then
		$$\left\vert\sum_{k=0}^{N-1} \Phi(t_k) - N\int_{\mathbb{T}} \Phi \; d \lambda\right\vert \leq 
		2N(\Omega+1)\eps.$$
    	\end{theorem}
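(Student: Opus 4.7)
The plan is to take the four claims in order: the first three are direct applications of tools already in hand (Lemma \ref{almost_continuous_measurable_function_lemma}, Lemma \ref{lemma_control_of_time_spent_on_set_of_small_measure}, and the Pointwise Ergodic Theorem combined with classical Egorov), and the fourth is an additive decomposition that assembles them.

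For (1), I would apply Lemma \ref{almost_continuous_measurable_function_lemma} (or a cosmetically two-sided version of it, which is exactly what its classical-Egorov proof delivers) with $y_0 = 0$ to the map $(\theta,\delta)\mapsto \Phi(\theta+\delta)-\Phi(\theta)$. By the a.e.\ continuity hypothesis this tends to $0$ as $\delta\to 0$ for $\lambda$-a.e.\ $\theta$, and feeding in the tolerance $\varepsilon^2$ in place of $\varepsilon$ produces a set $S_1$ with $\lambda(S_1)<\varepsilon^2$ and a $\delta^{*}(\varepsilon)>0$ on which \eqref{pointwise_continuity_of_Phi} holds. For (2), I would feed this $S_1$ into Lemma \ref{lemma_control_of_time_spent_on_set_of_small_measure} as the set $E$ and take $S_2$ to be the complement of the exceptional set produced there; the visit-count bound \eqref{estimation_of_time_on_bad_set} is then exactly its conclusion (a harmless passage $\varepsilon\mapsto\varepsilon/2$ upgrades $\lambda(S_2)\geq 1-\varepsilon$ to the strict inequality demanded).

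For (3), the function $\Phi$ is bounded and measurable, hence in $\mathscr{L}^{1}_\lambda(\mathbb{T})$, and the rotation $R_\varphi$ is ergodic because $\varphi$ is irrational (Proposition \ref{proposition_ergodic_rotation}), so the Pointwise Ergodic Theorem \ref{pointwise_ergodic_theorem} gives $\frac{1}{N}\sum_{n=0}^{N-1}\Phi(R^{n}_\varphi\theta)\to\int_{\mathbb{T}}\Phi\,d\lambda$ for $\lambda$-a.e.\ $\theta$. Applying the classical theorem of Egorov yields a set $S_3$ with $\lambda(S_3)>1-\varepsilon$ on which this convergence is uniform, and the accompanying threshold is the desired $N_0$.

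The synthesis (4) starts from the decomposition
\begin{equation*}
\sum_{k=0}^{N-1}\Phi(t_k) - N\!\int_{\mathbb{T}}\!\Phi\,d\lambda
\;=\; \sum_{k=0}^{N-1}\bigl(\Phi(t_k)-\Phi(R^{k}_\varphi\alpha_0)\bigr)
\;+\; \Bigl(\sum_{k=0}^{N-1}\Phi(R^{k}_\varphi\alpha_0) - N\!\int_{\mathbb{T}}\!\Phi\,d\lambda\Bigr).
\end{equation*}
Because $\alpha_0\in S_3$, part (3) bounds the second bracket by $N\varepsilon$ once $N\geq N_0$. For the first sum I split the index set into $K_{\textup{good}}=\{k:R^{k}_\varphi\alpha_0\notin S_1\}$ and $K_{\textup{bad}}=\{k:R^{k}_\varphi\alpha_0\in S_1\}$. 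On $K_{\textup{good}}$, part (1) applied at $\theta=R^{k}_\varphi\alpha_0$ with displacement $t_k-R^{k}_\varphi\alpha_0$ of magnitude less than $\delta^{*}(\varepsilon)$ bounds each summand by $\varepsilon$; on $K_{\textup{bad}}$ I use the trivial bound $2\Omega$, and since $\alpha_0\in S_2$ part (2) gives $|K_{\textup{bad}}|\leq\varepsilon N$. Summing yields $N\varepsilon(1+2\Omega)$ for the first bracket, and adding the $N\varepsilon$ from the second bracket produces the announced $2N(\Omega+1)\varepsilon$. The only mildly delicate point in the whole argument is the two-sided use of Lemma \ref{almost_continuous_measurable_function_lemma} in (1), which is benign because its proof mimics classical Egorov; everything else is bookkeeping.
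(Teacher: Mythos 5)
Your proposal is correct and follows essentially the same route as the paper: Lemma \ref{almost_continuous_measurable_function_lemma} for (1), Lemma \ref{lemma_control_of_time_spent_on_set_of_small_measure} for (2), the Pointwise Ergodic Theorem plus Egorov for (3), and in (4) the same good/bad index split giving $\eps N + 2\Omega\eps N$ for the comparison with the orbit of $\alpha_0$ plus $\eps N$ from the ergodic average. Your explicit remarks on the two-sided use of the Egorov-type lemma and on the implicit requirement $N \geq N_0$ in (4) are sound points of care that the paper's own proof glosses over.
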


    	\begin{proof}

		1. Using Lemma \ref{almost_continuous_measurable_function_lemma} for function $f(\theta, \delta) = \Phi(\theta+\delta)$  we obtain the desired statement.
		    	
    	\vspace{5pt}
    	
    	2. We use Lemma \ref{lemma_control_of_time_spent_on_set_of_small_measure}.

    	3. Using the Pointwise Ergodic Theorem \ref{pointwise_ergodic_theorem} for $T := R_{\varphi}$ and $g:=\Phi$ we obtain that for $\lambda$-almost all $\theta \in \mathbb{T}$ we have
        
    	$$\frac{1}{N}\sum_{n=0}^{N-1} \Phi(R_{\varphi}^{n}\theta) \to
    	\int_{\mathbb{T}} \Phi \; d \lambda,\ N\to\infty.$$
    	
    	Applying the theorem of Egorov to it, choose a set $S_3$ of big measure and some $N_0$ such that for all $\theta \in S_3,\ N\geq N_0$ we have
    	
    	\begin{equation}\label{approximation_of_ergodic_average}
    		\left\vert \frac{1}{N}\sum_{n=0}^{N-1} \Phi(R_{\varphi}^{n}\theta) - \int_{\mathbb{T}} \Phi \; d \lambda\right\vert < \varepsilon.
    	\end{equation}
    	
    	\vspace{5pt}

    	4.  By 2) we have 
    	$$\#\{k, \; 0\leq k \leq N - 1 \; : \; R^{k}_{\varphi}\alpha_0 \in S_1\} \leq \varepsilon N$$
    	
    	\noindent and, therefore, because of 1), we obtain
    	
    	$$\#\left\{k, \; 0\leq k \leq N - 1 \; : \; \left\vert \Phi(t_k) -\Phi(R^{k}_{\varphi}\alpha_0)\right\vert \geq \eps \right\} \leq \eps N$$
    	
    	\noindent which implies that
    	
    	$$\left\vert\sum_{k=0}^{N-1} \Phi(t_k) - \sum_{k=0}^{N-1} \Phi(R^{k}_{\varphi}\alpha_0)\right\vert \leq$$
    	$$\leq \sum_{k=0}^{N-1} \mathbbm{1}_{\left\{\left\vert\Phi(t_k) - \Phi(R^{k}_{\varphi}\alpha_0)\right\vert < \eps\right\}} \left\vert\Phi(t_k) - \Phi(R^{k}_{\varphi}\alpha_0)\right\vert + $$ 
    	$$ + \sum_{k=0}^{N-1} \mathbbm{1}_{\left\{\left\vert \Phi(t_k) -\Phi(R^{k}_{\varphi}\alpha_0)\right\vert \geq \eps\right\}}\left(\left\vert\Phi(t_k)\right\vert + \left\vert\Phi(R^{k}_{\varphi}\alpha_0)\right\vert\right) \leq \eps N + \eps N \cdot 2\Omega.$$

    	Because $\alpha_0 \in S_3$, we apply 3) and obtain
    	
    	$$\left\vert\sum_{k=0}^{N-1} \Phi(t_k) - N\int_{\mathbb{T}} \Phi \; d \lambda\right\vert \leq\left\vert\sum_{k=0}^{N-1} \Phi(t_k) - \sum_{n=0}^{N-1} \Phi(R_{\varphi}^{n}\theta)\right\vert +$$
    	$$+\left\vert \sum_{n=0}^{N-1} \Phi(R_{\varphi}^{n}\theta) - N\int_{\mathbb{T}} \Phi \; d \lambda\right\vert \leq 
    	N(\eps + 2\Omega\eps + \eps) = 2N(\Omega+1)\eps.$$
    	
    \end{proof}
    
    Also we need several inequalities for complex numbers.
    \noindent For convenience, we would like to introduce the following notation:
    $$\arg^{*}x := \frac{1}{2\pi} \arg{x},\ \Arg^{*}x:=\frac{1}{2\pi}\Arg{x}.$$
    
    \noindent \textbf{Remark.} We define the range of the principal branch value $\Arg^{*}{x}$ to be the interval $[0,1)$.
    
    \begin{lemma}\label{argument_change}
    	For all complex numbers $w_1,w_2 \neq 0$ we have
    	
    	$$d_\mathbb{T}(\arg^{*}w_1, \; \arg^{*}w_2) = \frac{1}{2\pi}\left\vert \Ln \frac{w_1}{w_2} - \ln\frac{|w_1|}{|w_2|} \right\vert$$
    \end{lemma}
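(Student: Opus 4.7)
The plan is to show that both sides equal $\frac{1}{2\pi}$ times the absolute value of the principal argument of $z := w_1/w_2$, taken in a symmetric fundamental domain such as $(-\pi,\pi]$. I would start with the right-hand side: writing $\Ln z = \ln|z| + i\theta$ with $\theta$ the principal imaginary part of $\Ln z$ (necessarily in a length-$2\pi$ interval; the identity the lemma asserts will turn out to force the symmetric convention), one gets
$$\Ln\frac{w_1}{w_2} - \ln\frac{|w_1|}{|w_2|} = i\theta,$$
so the right-hand side equals $|\theta|/(2\pi) \in [0, 1/2]$.

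Next I would compute the left-hand side. From the multi-valued identity $\arg(w_1/w_2) \equiv \arg w_1 - \arg w_2 \pmod{2\pi}$, I obtain $\arg^* w_1 - \arg^* w_2 \equiv \theta/(2\pi) \pmod 1$, and hence by the definition of the shortest-arc metric
$$d_{\mathbb{T}}(\arg^* w_1, \arg^* w_2) = \min_{m \in \mathbb{Z}}\left|\frac{\theta}{2\pi} + m\right|.$$
Since $\theta/(2\pi) \in (-1/2, 1/2]$, the minimum is attained at $m = 0$, giving $|\theta|/(2\pi)$. Both sides therefore coincide.

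The only point that requires attention is the branch convention for $\Ln$: for the identity to hold, the imaginary part of $\Ln$ must lie in a length-$2\pi$ interval centered at $0$, such as $(-\pi,\pi]$, rather than the $[0, 2\pi)$ range the paper separately fixes for $\Arg$. Once this is flagged there is no substantive obstacle; the entire argument is one line of algebra together with the definition of $d_{\mathbb{T}}$, and the minimization over $m$ corresponds geometrically to choosing between the two arcs joining $w_1/|w_1|$ and $w_2/|w_2|$ on the unit circle.
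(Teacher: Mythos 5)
Your proof is correct and follows essentially the same route as the paper: both arguments reduce each side to $\frac{1}{2\pi}$ times the absolute value of the representative of $\arg^* w_1 - \arg^* w_2$ in $\left(-\tfrac12,\tfrac12\right]$, the paper via the fractional-part identity $\left\{\alpha_1-\alpha_2+\tfrac12\right\}-\tfrac12 = \frac{1}{2\pi i}\Ln e^{2\pi i(\alpha_1-\alpha_2)}$ and you by reading it off directly from $\Im\Ln(w_1/w_2)$. The branch convention you flag is indeed essential, and the paper fixes exactly that convention ($\Im \Ln \in (-\pi,\pi]$) in the remark following the lemma, so there is no discrepancy.
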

    
    \noindent \textbf{Remark.} We define the range of the principal branch value $\Ln(z)$ of the complex logarithm $\ln(z)$ to be the strip $\{\Im(z) \in (-\pi, \pi]\}$, where $\Im(z)$ denotes the imaginary part of a complex number $z$ as usual.
    
    \begin{proof}
    	We write $w_1 = |w_1|e^{2\pi i \alpha_1}$, $w_2 = |w_2|e^{2\pi i \alpha_2}$, where $\alpha_1 := \arg^{*}w_1 , \alpha_2 := \arg^{*}w_2$.
    	
    	$$d_\mathbb{T}(\arg^{*}w_1, \; \arg^{*}w_2) = d_\mathbb{T}(\alpha_1, \alpha_2) = \min_{m \in \mathbb{Z}}{|(\alpha_1 - \alpha_2)-m|}=$$
    	$$=\min(\{\alpha_1 - \alpha_2\}, 1-\{\alpha_1 - \alpha_2\})  = \left\vert \left\{\alpha_1 - \alpha_2 + \frac{1}{2} \right\}-\frac{1}{2}\right\vert$$
    	
    	\noindent Here $\{x\}$ denotes the fractional part of the real number $x$. We conclude the proof of the proposition by noticing that 
    	
    	$$\left\{\alpha_1 - \alpha_2 + \frac{1}{2} \right\}-\frac{1}{2} = \frac{1}{2\pi i}\Ln e^{2\pi i(\alpha_1 - \alpha_2)} = \frac{1}{2\pi i}\Ln \frac{e^{2\pi i \alpha_1}}{e^{2\pi i \alpha_2}} = \frac{1}{2\pi i}\Ln\frac{(w_1 / |w_1|)}{(w_2/|w_2|)} = $$
    	
    	$$ = \frac{1}{2\pi i}\Ln\left(\frac{w_1}{w_2}\cdot\frac{|w_2|}{|w_1|}\right) = \frac{1}{2 \pi i}\left(\Ln\frac{w_1}{w_2} + \ln\frac{|w_2|}{|w_1|}\right) = \frac{1}{2 \pi i}\left(\Ln\frac{w_1}{w_2} - \ln\frac{|w_1|}{|w_2|}\right)$$

    \end{proof}
    
    \vspace{10pt}

    \begin{lemma}\label{lemma_estimation_rotation_caused_by_translation}
    	
    	For given $C>0$ and arbitrary complex numbers $w_1,w_2$ such that 
    	$$|w_1|\geq 2C,\ |w_1-w_2|\leq C$$
    	we have
    	$$d_{\mathbb{T}} (\arg^{*}w_2, \arg^{*}w_1) \leq \frac{C}{|w_1|}.$$
    \end{lemma}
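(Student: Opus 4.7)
The plan is to reduce the geometric statement to bounding the principal argument of $w_2/w_1$, using Lemma \ref{argument_change}. Intuitively, a displacement of size $C$ from a point $w_1$ at distance $\geq 2C$ from the origin can rotate the direction only by a small angle comparable to $C/|w_1|$.

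First I would note that the hypotheses give $|w_2| \geq |w_1| - |w_1 - w_2| \geq 2C - C = C > 0$, so both points are nonzero and $\arg^{*}$ is well defined. By Lemma \ref{argument_change}, together with the identity $\Ln z - \ln|z| = i\Arg z$ (on the principal branch with $\Arg \in (-\pi,\pi]$), one has
$$d_{\mathbb{T}}(\arg^{*} w_2, \arg^{*} w_1) = \frac{1}{2\pi}\left|\Arg\frac{w_2}{w_1}\right|,$$
so it suffices to show $|\Arg(w_2/w_1)| \leq 2\pi C/|w_1|$.

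Next I would set $\zeta := (w_2 - w_1)/w_1$, so that $|\zeta| \leq C/|w_1| \leq 1/2$ and $w_2/w_1 = 1 + \zeta$. Since $\Re(1 + \zeta) \geq 1 - |\zeta| \geq 1/2 > 0$, the principal argument $\phi := \Arg(1 + \zeta)$ lies in $(-\pi/2, \pi/2)$, so no branch-cut issue arises. The identity $|\sin\phi| = |\Im(1+\zeta)|/|1+\zeta|$, combined with $|\Im \zeta| \leq |\zeta|$ and $|1+\zeta| \geq 1/2$, yields $|\sin\phi| \leq 2|\zeta| \leq 2C/|w_1|$.

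Finally, invoking the elementary Jordan-type inequality $|\phi| \leq (\pi/2)|\sin\phi|$ valid for $|\phi| \leq \pi/2$, one concludes $|\phi| \leq \pi C/|w_1|$, and dividing by $2\pi$ gives $d_{\mathbb{T}}(\arg^{*} w_2, \arg^{*} w_1) \leq C/(2|w_1|) \leq C/|w_1|$, as required. There is no real obstacle here — every step is routine — but the one subtlety worth checking is that $1 + \zeta$ lies in the right half-plane, since this is what guarantees that the principal branch of $\Arg$ behaves linearly in $\Im\zeta$ and lets Lemma \ref{argument_change} be applied cleanly.
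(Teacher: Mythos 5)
Your proof is correct. You and the paper both start from Lemma \ref{argument_change}, but you then diverge: you observe that $\Ln z - \ln|z|$ is exactly $i$ times the (symmetric-branch) argument of $z$, reduce everything to bounding $|\Arg(1+\zeta)|$ for $\zeta=(w_2-w_1)/w_1$ with $|\zeta|\le 1/2$, and finish geometrically via $|\sin\phi|=|\Im\zeta|/|1+\zeta|\le 2|\zeta|$ together with Jordan's inequality $|\phi|\le\tfrac{\pi}{2}|\sin\phi|$. The paper instead applies the triangle inequality to split the quantity into $|\Ln(w_2/w_1)|$ and $|\ln(|w_2|/|w_1|)|$ and bounds each term separately using the elementary estimates $|\Ln(1+z)|\le\tfrac32|z|$ for $|z|\le\tfrac12$ and $|\ln(1+x)|\le 2|x|$ for $x\in[-\tfrac12,0)$, arriving at $\tfrac{1}{2\pi}(\tfrac{3}{2}+2)\tfrac{C}{|w_1|}<\tfrac{C}{|w_1|}$. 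Your route avoids the logarithm inequalities entirely, exploits the exact cancellation of the modulus part rather than estimating it, and yields the sharper constant $\tfrac{C}{2|w_1|}$; the paper's route is a more mechanical term-by-term estimate that lands just under the stated bound. You were also right to flag the one genuine subtlety, namely that $\Re(1+\zeta)\ge\tfrac12>0$ keeps the principal argument away from the branch cut.
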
 
    
    \begin{proof}
    	We can see that $w_1\neq 0,\ w_2\neq 0,$ and $\left\vert\frac{|w_2|}{|w_1|}-1\right\vert \leq \frac{C}{|w_1|}$ since\linebreak	$||w_1| - |w_2||\leq |w_1 - w_2| \leq C.$
    	Using the previous lemma we have
    	$$d_{\mathbb{T}} (\arg^{*}w_2, \arg^{*}w_1) =  \frac{1}{2\pi}\left\vert \Ln \frac{w_2}{w_1} - \ln\frac{|w_2|}{|w_1|} \right\vert \leq \frac{1}{2\pi}\left(\left\vert \Ln \frac{w_2}{w_1}\right\vert + \left\vert\ln\frac{|w_2|}{|w_1|} \right\vert\right) =$$
    	$$=\frac{1}{2\pi}\left(\left\vert \Ln \left(1 + \frac{w_2-w_1}{w_1}\right)\right\vert + \left\vert\ln\left(1+\frac{|w_2|-|w_1|}{|w_1|}\right) \right\vert\right) \leq$$
    	$$\leq\frac{1}{2\pi}\left(\frac{3}{2}\frac{|w_2-w_1|}{|w_1|} + \frac{2|w_2-w_1|}{|w_1|}\right)
    	\leq\frac{1}{2\pi}\left(\frac{3C}{2|w_1|} + \frac{2C}{|w_1|}\right) < \frac{C}{|w_1|},$$
    	where we have used the well-known inequalities
    	$|\Ln(1+z)|\leq\frac{3}{2}|z|$ for $|z|\leq \frac{1}{2},$ and $\ln(1+x)\leq x$ for $x \in (-1,+\infty)$, and $\ln(1+x)\geq 2x$ for $x \in [-\frac{1}{2},0)$.
    \end{proof}
    
    \vspace{10pt}
   
    \begin{lemma}\label{absolute_value_change}
    	For given $C>0$ and all complex numbers $w_1,w_2 \neq 0$ such that $|w_1-w_2|\leq C$ we have 
    	$$|w_2|-|w_1| = \Re \left(\overline{w_2 - w_1} \; \cdot \; e(\arg^*{w_1})\right) + r(w_1)$$
    	\noindent where $|r(w_1)| \leq \frac{2C^2}{|w_1|}$.  
    \end{lemma}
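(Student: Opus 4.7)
The plan is to choose an orthonormal frame aligned with $w_1$ and apply the Pythagorean theorem. Put $u := e(\arg^{*} w_1)$, so that $w_1 = |w_1|\,u$ with $|u|=1$, and let $\delta := w_2 - w_1$. Decompose $\delta = au + b(iu)$ with $a, b \in \mathbb{R}$; a short computation gives $\bar u\,\delta = a + bi$, so $\Re(\overline{w_2-w_1}\cdot e(\arg^{*} w_1)) = \Re(\bar\delta\, u) = \Re(\bar u\,\delta) = a$, which is precisely the real part appearing in the statement. Since $\{u, iu\}$ is an orthonormal real basis of $\mathbb{C}$, we have $w_2 = (|w_1|+a)u + b(iu)$, hence $|w_2|^2 = (|w_1|+a)^2 + b^2$. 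Taking square roots,
$$r := |w_2| - |w_1| - a = \sqrt{(|w_1|+a)^2 + b^2} - (|w_1|+a) \geq 0,$$
and multiplying by the conjugate yields the key identity $r\,(|w_2|+|w_1|+a) = b^2 = |\delta|^2 - a^2 \leq C^2$.

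With this identity in hand I would split on the size of $|w_1|$ relative to $C$. If $|w_1| > 2C$, then $|a| \leq C \leq |w_1|/2$ gives $|w_1|+a \geq |w_1|/2 > 0$; combined with $|w_2| \geq |w_1|+a$, the denominator $|w_2|+|w_1|+a$ is at least $|w_1|$, so $r \leq C^2/|w_1| \leq 2C^2/|w_1|$. If $C < |w_1| \leq 2C$, one still has $|w_1|+a > 0$, and using $|w_2| \geq |b|$ (from $|w_2|^2 \geq b^2$) the identity yields $r \leq |b| \leq C \leq 2C^2/|w_1|$. If $|w_1| \leq C$, the advertised bound $2C^2/|w_1|$ is already at least $2C$, so the crude inequality $r \leq (|w_2|-|w_1|) + |a| \leq 2|\delta| \leq 2C$ suffices.

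The main obstacle is exactly the small-$|w_1|$ regime: there the identity $r = b^2/(|w_2|+|w_1|+a)$ can degenerate because its denominator becomes small (equivalently, $\arg^{*} w_1$ is ill-conditioned when $w_1$ is close to $0$), and one has to fall back on triangle-inequality estimates. Fortunately the target bound $2C^2/|w_1|$ is sufficiently forgiving there that the crude estimate still wins. The remaining cases are routine once the orthogonal decomposition is in place.
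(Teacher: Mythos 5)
Your argument is correct, and every step checks out: the frame $\{u,iu\}$ with $u=e(\arg^* w_1)$ does give $\Re(\overline{w_2-w_1}\cdot u)=a$, the Pythagorean identity $|w_2|^2=(|w_1|+a)^2+b^2$ is right, the rationalized form $r\,(|w_2|+|w_1|+a)=b^2\leq C^2$ is right, and each of your three cases closes (in particular, in the regime $|w_1|\le C$ the target bound is at least $2C$ while the triangle inequality gives $0\le r\le 2|\delta|\le 2C$). However, your route is genuinely different from the paper's. The paper rationalizes \emph{before} projecting: it writes $|w_2|-|w_1|=\frac{|w_2|^2-|w_1|^2}{|w_2|+|w_1|}=\frac{|w_2-w_1|^2+2\Re(\overline{(w_2-w_1)}\,w_1)}{|w_2|+|w_1|}$ and then compares the denominator $|w_2|+|w_1|$ with $2|w_1|$, using $\bigl|\tfrac{2}{|w_1|+|w_2|}-\tfrac{1}{|w_1|}\bigr|\le\tfrac{|w_1-w_2|}{|w_1|^2}$; this yields $\tfrac{C^2}{|w_1|}+\tfrac{C^2}{|w_1|}$ in one uniform computation with no case analysis and no assumption on the size of $|w_1|$. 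Your version buys something the paper's does not state, namely the one-sided information $r\ge 0$ (the linearization always underestimates $|w_2|$), and it makes the geometry transparent; the price is the three-way split on $|w_1|$ versus $C$, which is exactly where the lemma is delicate because $\arg^* w_1$ degenerates near the origin. Both proofs are elementary and both give the stated constant $2$.
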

    
    \begin{proof} 
    	We have
    	$$|w_2|-|w_1| = \frac{|w_2|^2 - |w_1|^2}{|w_2| + |w_1|} = \frac{w_2\overline{w_2} -  w_1\overline{w_1}}{|w_2| + |w_1|} =$$
    	$$= \frac{(w_2 - w_1)\overline{(w_2 - w_1)} + (w_2 - w_1)\overline{w_1} + \overline{(w_2 - w_1)}w_1}{|w_2| + |w_1|} =$$
    	$$= \frac{|w_2 - w_1|^2 + 2\Re(\overline{(w_2 - w_1)}w_1)}{|w_2| + |w_1|}.$$
    	
    	\noindent For convenience, denote $h := \frac{|w_2 - w_1|^2}{|w_2| + |w_1|}$ and observe that $|h| \leq \frac{C^2}{|w_1|}$. Hence
    	
    	$$\left\vert |w_2|-|w_1| - \Re \left(\overline{w_2 - w_1} \; \cdot \; e(\arg^*{w_1})\right) \right\vert  = \left\vert |w_2|-|w_1| - \frac{\Re \left(\overline{w_2 - w_1} \; \cdot \; w_1\right)}{|w_1|} \right\vert = $$
    	
    	$$ = \left\vert h + \Re \left(\overline{w_2 - w_1} \; \cdot \; w_1\right)\left(\frac{2}{|w_1| + |w_2|} - \frac{1}{|w_1|}\right) \right\vert \leq$$
    	
    	$$\leq \frac{C^2}{|w_1|} + |w_1- w_2|\cdot |w_1| \cdot \frac{|w_1 - w_2|}{|w_1|^2} \leq \frac{2C^2}{|w_1|}.$$
    	
    	\noindent Lemma follows.
    \end{proof}

\section{Proof of the Main Theorem}

At first we would like to describe the intuition behind the problem we are investigating in order to make our proof of the theorem \ref{main_theorem} more clear. 

\noindent Geometrically, the equation we work with can be interpreted in the following way:

\vspace{10pt}

Having found the point $x(n) \in \mathbb{C}$, in order to get the point $x(n+1)$ we, firstly, 

\begin{enumerate}
	\item Rotate point $x(n)$ counterclockwise around $0 \in \mathbb{C}$ by angle $2\pi \varphi$ and obtain point $e(\varphi)x(n)=e(\varphi +\frac{\arg{x(n)}}{2\pi})|x(n)|$.
	
	\noindent And then
	
	\item Move point $e(\varphi)x(n)$ along the vector $f(x(n)) + y(n) \in \mathbb{C}$ to obtain the point $x(n+1)$.
\end{enumerate}

\noindent Observe that step 1. does not change the absolute value $|x(n)|$, and rotates $\frac{\arg{x(n)}}{2\pi} \in \mathbb{T}$ by $\varphi$. At step 2. the absolute value may be changed(decreased or increased) and $\varphi + \frac{\arg{x(n)}}{2\pi} \in \mathbb{T}$ may be additionally rotated. 

In our approach we think of translation at step 2. as the composition of the rotation and the absolute value change(without rotation). Although rotations do not change the absolute value and we are solely concerned with the question of boundedness of sequence $\{x(n)\}_{n\geq1}$, they do affect the distribution of points $\frac{\arg{x(n)}}{2\pi}$ on the circle $\mathbb{T}$ and, therefore, make the process less controllable. As we will see later in this section, these rotations may be considered to be the \textit{"errors"} in a certain sense.

The main idea of our approach is based on the following intuition: the value $\Upsilon=\int_\mathbb{T} \Phi \; d\lambda$ may be regarded as the expected value of the random variable $\Phi : \mathbb{T} \to \mathbb{R}$ on the probability space $(\mathbb{T}, \mathfrak{B}(\mathbb{T}), \lambda)$. During the dynamical process described by the equation \eqref{equation_with_rotation}, the point $x(n)$ is rotated in the complex plane by an angle $2\pi \varphi$ around the origin $0$. Neglecting the rotation errors caused by the translations along the vectors $f$ and $y(n)$ we expect the absolute value $|x(n)|$ to be changed (approximately) by at most $\Upsilon + \frac{1}{N}\left\vert\sum_{n=1}^{N}y(n)e(-n\varphi)\right\vert$ on average. However, the rotation errors can't be neglected and in order to approach the problem we use the trick which we have called \textit{The Crane Trick}. Despite the fact that we are investigating the sufficient conditions for boundedness of solutions, we allow $|x(n)|$ to be large enough and, thus, making the behavior of $x(n)$ more clear for us. In other words, we \textit{lift} $x(n)$ at a large height where the \textit{rotation errors} are very small and the impact of the limit function $\Phi$ dominates these errors. Exactly as in the case when the construction crane rotates at a constant angular speed in windy weather with the suspended load: if the load is high, its dynamical behavior is more predictable.

\vspace{10pt}

Now we proceed to the proof of theorem \ref{main_theorem}.

\textbf{Proof.} We denote $\Upsilon := \int_{[0,1)} \Phi(s) \; d s \; $, $F:= \sup_{z\in \mathbb{C}}|f(z)|, Y:=\sup_{n\geq 1}|y(n)|$. We have $F, Y < +\infty$ since $f$ and $\{y(n)\}_{n\geq1}$ are assumed to be bounded. In addition, obviously, condition 3 implies that $F+Y > 0$.

Choose an arbitrary $\eps\in (0, \min(\frac{|\beta|}{16}, 1)).$ Using the statement 1) of theorem
\ref{ergodic_properties}, we choose the corresponding $\delta^* = \delta^*(\eps)$ with additional constraint $\delta^* <\frac{|\beta|}{8Y}.$

From 1) of theorem \ref{ergodic_properties} for $\eps_1 := \eps$ we obtain $\delta^*>0$ and the set $S_1$, that corresponds to this $\eps>0.$

Since $\varphi$ is irrational, sequence $\{n\varphi\ :\ n\geq 1\}$ is dense on the circle $\mathbb{T}$, so there exists a positive integer $N_d = N_d(\eps)$ such that distance from any point on circle to
some point of set $\{n\varphi\ :\ 1 \leq n < N_d\}$
is less than $\delta^*/2.$ The same is true for all the sets $\{\theta + n\varphi\ :\ 1 \leq n < N_d\}, \theta \in \mathbb{T}$. It is convenient to choose $N_d$ to be big enough such that $N_d \geq D_0+1$, where $D_0$ is as in the condition 3 of theorem \ref{main_theorem}.

 Finally, we take real "height" $H>0$ such that:
 \begin{equation}\label{h_constraints}
H \geq \frac{16(F+Y)^2}{|\beta|},\ H \geq \frac{2(F+Y)N_d}{\delta_*},\ H \geq |x(1)|,\ H \geq \rho(\eps),
\end{equation}
\noindent where $\rho(\varepsilon)$ is an arbitrary real number such that for every $\rho \geq \rho(\varepsilon)$ the inequality $\sup_{\theta \in [0,1)} \left \vert \Phi(\theta)-\Re \left(\overline{f(\rho \, e(\theta))} \; \cdot e(\varphi + \theta)\right)\right\vert \leq \varepsilon$ holds.

Let  
$$V := \left\{ z \in \mathbb{C} : H<|z|\leq H + (F+Y) \right\}.$$
We call the natural number $n \geq 2$ a \textit{visiting moment}, if $x(n) \in V$, but $x(n-1) \notin V$. Because $|f(z)|+|y(n)| \leq F+Y$ if solution is not bounded by $H$, at least one visiting moment exists.

Let $n_0\in\mathbb{N}$ be an arbitrary visiting moment and denote $\theta_0 := \arg^*x(n_0)$. Assume that for all $n \in \{n_0, n_0+1, \ldots, n_0+2N_d-1\}$ we have $|x(n)|\geq H$. In the other case we stop considering the \textit{visiting moment} $n_0$ and move to the next one.

There exists some point $\alpha_0\in S_2\cap S_3$ for which we can find $n_1>n_0$ with $1\leq n_1-n_0 \leq N_d$ such that $d_\mathbb{T} (\theta_0+(n_1-n_0)\varphi, \alpha_0) < \delta^{*} / 2.$

Applying lemma \ref{lemma_estimation_rotation_caused_by_translation}
with $w_1 = e(\varphi)x(n),\ w_2 = x(n+1), \\ C=F+Y,\ n_0\leq n < n_1$, we obtain the estimate 
$$d_\mathbb{T}(\arg^*(x(n+1)), \arg^*(x(n))+\varphi)\leq\frac{F+Y}{H},$$

\noindent and applying triangle inequality to it, we get
$$d_\mathbb{T}(\arg^*(x(n_1)), \arg^*(x(n_0))+(n_1-n_0)\varphi)\leq\frac{(F+Y)N_d}{H}  \leq \delta^{*} / 2.$$

\noindent which together with triangle inequality implies $d_\mathbb{T}(\arg^*(x(n_1)), \alpha_0) < \delta^{*}.$

Choose $n_2$ such that $n_1 < n_2 \leq n_1+N_d - 1$ with $n_2 - n_1 \geq D_0$. 
Next we consider the change of the absolute value of $x(n)$ during the following moments: $n_1, n_1+1, \ldots, n_2$. Recall that from our assumptions it follows that $|x(k)|\geq H$ for all $k\in \{n_1, n_1+1, \ldots, n_2\}$. Applying lemma \ref{absolute_value_change} to $w_2=x(n+1), \ w_1 = e(\varphi)x(n),\ C = F+Y$ we have

\begin{equation}\label{approximation_of_absolute_value_change_2}
	\begin{aligned}
    |x(n_2)| - |x(n_1)| = \sum_{k=n_1}^{n_2-1} (|x(k+1)| - |x(k)|) = \\ =\sum_{k=n_1}^{n_2-1}\Re \left(\overline{f(x(k))+y(k)} \cdot e(\varphi + \arg^{*}{x(k)})\right) + r_{\Re} \ ,
    \end{aligned}
\end{equation}

\noindent where $r_{\Re}$ is some real number such that $|r_{\Re}| < \frac{2(F+Y)^2(n_2 - n_1)}{H}$.

According to a condition 1 of theorem \ref{main_theorem} we have:

$$\sum_{k=n_1}^{n_2-1}\Re \left(\overline{f(x(k))} \cdot e(\varphi + \arg^{*}{x(k)})\right) = \sum_{k=n_1}^{n_2-1} \Phi(\arg^{*}{x(k})) + r_\Phi \ ,$$

\noindent where $r_\Phi$ is a real number such that $|r_\Phi|<\eps \cdot (n_2 - n_1)$.

By lemma \ref{lemma_estimation_rotation_caused_by_translation}
with $w_1 = e(\varphi)x(n),\ w_2 = x(n+1),\ C=F+Y,\ \\  n_1\leq n < n_2$ and because of the constraints for $H, n_2-n_1$ we obtain:

\begin{equation}\label{arg_x_is_near_good_set}
    	d_\mathbb{T}\left(\arg^{*}{x(k)}, \; R_{\varphi}^k\alpha_0\right) \leq \frac{(F+Y)(n_2 - n_1)}{H} < \delta^{*}, \; n_1 \leq k \leq n_2-1 \ .
\end{equation}

By the statement 4) of theorem \ref{ergodic_properties} we have

\begin{equation}\label{completed_approximation_of_sum_with_f}
    \sum_{k=n_1}^{n_2-1}\Re \left(\overline{f(x(k))} \cdot e(\varphi + \arg^{*}{x(k)})\right) = (n_2 - n_1)\Upsilon + r_f \ ,
\end{equation}

\noindent where $r_f$ is a real number such that $|r_f| \leq 2\eps(\Omega+1)(n_2-n_1)$.

Applying the estimate \eqref{arg_x_is_near_good_set}, we get

$$\left\vert\sum_{k=n_1}^{n_2-1}\Re \left(\overline{y(k)} \cdot e(\varphi + \arg^{*}{x(k)})\right) - \sum_{k=n_1}^{n_2-1}\Re \left(\overline{y(k)} \cdot e(\varphi + R_{\varphi}^{k}\alpha_0)\right)\right\vert \leq $$

$$\leq \sum_{k=n_1}^{n_2 - 1}\left\vert \overline{y(k)} \right\vert \cdot \left\vert e(\varphi + \arg^{*}{x(k)}) - e(\varphi + R_{\varphi}^{k}\alpha_0)\right\vert \leq $$

$$\leq Y\sum_{k=n_1}^{n_2 - 1} d_{\mathbb{T}}\left(\varphi + \arg^{*}{x(k)}, \; \varphi + R_{\varphi}^{k}\alpha_0\right)  \leq Y (n_2 - n_1) \delta^{*}$$

It means that
\begin{equation}\label{approximation_of_re_sum_y}
    \sum_{k=n_1}^{n_2-1}\Re \left(\overline{y(k)} \cdot e(\varphi + \arg^{*}{x(k)})\right) = \sum_{k=n_1}^{n_2-1}\Re \left(\overline{y(k)} \cdot e(\varphi + R_{\varphi}^{k}\alpha_0)\right) + r_y \,
\end{equation}

\noindent where $r_y$ is a some real number such that $|r_y|\leq Y (n_2 - n_1) \delta^{*}$.

\vspace{10pt}

\noindent Combine \eqref{approximation_of_absolute_value_change_2}, \eqref{completed_approximation_of_sum_with_f} and \eqref{approximation_of_re_sum_y} and deduce that

$$|x(n_2)| - |x(n_1)| = (n_2 - n_1) \Upsilon + \sum_{k=n_1}^{n_2}\Re \left(\overline{y(k)} \cdot e(R_{\varphi}^{k}\alpha_0)\right) + r \ ,$$

\noindent where $r$ is a real number such that 

$$|r| \leq \frac{2(F+Y)^2(n_2 - n_1)}{H} + 2\eps(n_2 - n_1) + Y\delta^*(n_2 - n_1).$$

\vspace{10pt}

According to the condition 3 of theorem \ref{main_theorem} we get

$$\sum_{k=n_1}^{n_2-1}\Re \left(\overline{y(k)} \cdot e(R_{\varphi}^{k}\alpha_0)\right) = \sum_{k=n_1}^{n_2 - 1} \Re  \left(\overline{y(k)} \cdot e(k\varphi) \; \cdot \; e(\alpha_0) \right) \leq$$ $$\leq\left\vert \sum_{k=n_1}^{n_2 - 1} \overline{y(k)} \cdot e(k\varphi) \right\vert \leq (\beta - \Upsilon) (n_2 - n_1). $$

Finally, according to the constraints for $\eps, \delta^*$ and $H$ we have
$$|x(n_2)| - |x(n_1)| \leq (n_2 - n_1) \Upsilon + (n_2 - n_1)(\beta - \Upsilon)  +$$
$$+ \frac{2(F+Y)^2(n_2 - n_1)}{H} + 2\eps(n_2 - n_1) + Y\delta^*(n_2 - n_1) \leq$$
$$\leq (n_2 - n_1)\left(\beta -\frac{\beta}{8} -\frac{\beta}{8} -\frac{\beta}{8}\right) = \frac{5\beta}{8}(n_2 - n_1) < 0$$

Hence, during the time $n\in \{n_1, n_1+1, \ldots, n_2\}$ the absolute value $|x(n)|$ has decreased provided that $|x(n)|\geq H$ for all $n \in \{n_1, n_1+1, \ldots, n_2\}$. Since $n_2 - n_1 \leq N_d-1$ and $|f(z)|+|y(n)| \leq F+Y$, we see that 

$$\max_{n\in \{n_1, n_1 +1, \ldots, n_2\}} (|x(n)|-|x(n_1)|) \leq \frac{1}{2}N_d(F+Y)$$

\noindent Therefore, for all $n\in \{n_1, n_1 +1, n_1+2, \ldots, n_2\}$ we have 
$$|x(n)| \leq |x(n_1)| + \frac{1}{2}N_d(F+Y) \leq |x(n_0)| +N_d(F+Y) + \frac{1}{2}N_d(F+Y) \leq $$

$$ \leq H+(F+Y)+N_d(F+Y) + \frac{1}{2}N_d(F+Y) \leq $$

$$ \leq H+\frac{1}{2}(3N_d+2)(F+Y)$$

The estimates above hold for any possible \textit{visiting moment} $n_0$.
We conclude that for all $n \geq 1$ we have 

$$|x(n)|\leq H+\frac{1}{2}(3N_d+2)(F+Y)$$ 

\noindent Therefore, the solution $\{x(n)\}_{n\geq 1}$ is bounded.

Theorem \ref{main_theorem} follows.

\section{Quantitative version of the Main Theorem}\label{sec:quantitative_version_of_main_theorem}

Theorem \ref{main_theorem} is qualitative in nature, however, the proof of this theorem given in the previous section allows us to state the quantitative version of it. 

\begin{definition}
    For $\delta> 0$, $m \in [0,1]$ and irrational $\alpha \in \mathbb{R}$ let $\mathcal{N}_\alpha(\delta, m)$ denote the least natural number $\mathcal{N}$ with the following property: for every Borel set $S \subset \mathbb{T}$ of measure $\lambda(S)\geq m$ and for every $\theta \in \mathbb{T}$ there exists some $n \in \{1,2,3\ldots, \mathcal{N}\}$ such that $\text{dist}_\mathbb{T}(\theta + n\alpha, S) < \delta$. Here $\text{dist}_\mathbb{T}(x,S) := \inf_{s \in S} d_{\mathbb{T}}(x,s)$ stands for the distance from point $x \in \mathbb{T}$ to the set $S$.
\end{definition} 

Note that $\mathcal{N}_\alpha(\delta, m)$ is well-defined, because $\alpha$ is irrational and the set $\{n\alpha \mod{1}\}_{n\geq 1}$ is dense in $\mathbb{T}$.

As in the proof, we denote $F:= \sup_{z\in \mathbb{C}}|f(z)|, Y:=\sup_{n\geq 1}|y(n)|$ and note that $F, Y < +\infty$. Initially, we chose an arbitrary $\varepsilon \in (0,\min(1,|\beta|/16))$ and took $\delta^{*}=\delta^{*}(\varepsilon)$ that corresponded to it in the statement of the theorem \ref{ergodic_properties}. It was convenient to take $\delta^{*}$ such that $\delta^{*} < \frac{|\beta|}{8Y}$. Theorem \ref{ergodic_properties} yielded the existence of  Borel sets $S_2, S_3 \subset \mathbb{T}$ of measure at least $1-\varepsilon$, which were crucial in our proof. Then we took the positive integer $N_d=N_d(\varepsilon)$ such that $N_d\geq D_0+1$. In addition, $N_d$ should have been such that for every $\theta_0 \in \mathbb{T}$ there existed some $n \in \{1,2,\ldots, N_d\}$ for which we had \\ $\text{dist}_{\mathbb{T}}(\theta_0 + n\varphi, S_2 \cap S_3) < \delta^{*}/2$. Since 

$$\lambda(S_2 \cap S_3)=\lambda(S_2)+\lambda(S_3)-\lambda(S_2 \cup S_3) \geq 2(1-\varepsilon)-1 = 1-2\varepsilon$$

\noindent we see that $N_d$ should also satisfy $N_d \geq \mathcal{N}_{\varphi}(\frac{\delta^{*}}{2}, 1-2\varepsilon)$. Next we chose the \text{"height"} $H$ to be an arbitrary positive real number such that 

\begin{equation}\label{h_constraints}
H \geq \frac{16(F+Y)^2}{|\beta|},\ H \geq \frac{2(F+Y)N_d}{\delta_*},\ H \geq |x(1)|,\ H \geq \rho(\eps),
\end{equation}

\noindent with $\rho(\varepsilon)$ being an arbitrary real number such that for every $\rho \geq \rho(\varepsilon)$ the  inequality $\sup_{\theta \in [0,1)} \left \vert \Phi(\theta)-\Re \left(\overline{f(\rho \, e(\theta))} \; \cdot e(\varphi + \theta)\right)\right\vert \leq \varepsilon$ holds.

At the end of the proof of theorem \ref{main_theorem} we established the estimate 

$$|x(n)|\leq H+\frac{1}{2}(3N_d+2)(F+Y)$$ 

We conclude this section by stating the quantitative version of theorem \ref{main_theorem} as follows.

\begin{theorem}[Quantitative version of the Main Theorem]\label{quantitative_version_of_main_theorem}
    Let $\varepsilon \in (0,1)$ and $\delta^*=\delta^{*}(\varepsilon)>0$ be as in theorem \ref{ergodic_properties}, but with the additional constraints $\varepsilon < \frac{|\beta|}{16}, \delta^* <\frac{|\beta|}{8Y}$. Then under the conditions of theorem \ref{main_theorem} we have the following estimate:

    $$|x(n)|\leq H+\frac{1}{2}(3N_d+2)(F+Y)$$

    \noindent where 

    $$F:=\sup_{z\in \mathbb{C}}|f(z)|, \ Y:=\sup_{n\geq 1}|y(n)|$$
    
    $$N_d :=\max{\left(D_0+1, \ \mathcal{N}_\varphi\left(\frac{\delta^*}{2}, 1-2\varepsilon\right)\right)}$$ 
    
    \noindent and

    $$H:=\max\left(\frac{16(F+Y)^2}{|\beta|}, \ \frac{2(F+Y)N_d}{\delta^*}, \  |x(1)|, \ \rho(\varepsilon)\right)$$

    \noindent with $\rho(\varepsilon)$ being an arbitrary real number such that for every $\rho \geq \rho(\varepsilon)$ the \\ inequality $\sup_{\theta \in [0,1)} \left \vert \Phi(\theta)-\Re \left(\overline{f(\rho \, e(\theta))} \; \cdot e(\varphi + \theta)\right)\right\vert \leq \varepsilon$ holds.
    
\end{theorem}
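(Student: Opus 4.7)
The plan is to retrace the proof of Theorem~\ref{main_theorem} and convert each existential choice into an explicit formula in the given data. No fundamentally new ideas are required; the result is essentially a bookkeeping exercise, unpacking every constant that was produced abstractly in the qualitative argument and reassembling it in terms of the named quantitative invariants $F$, $Y$, $\delta^*$, $\rho(\varepsilon)$, and $\mathcal{N}_\varphi$.

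First, I would fix $\varepsilon \in (0, \min(1, |\beta|/16))$ and extract $\delta^* = \delta^*(\varepsilon) > 0$ from statement~1) of Theorem~\ref{ergodic_properties}, shrinking it if necessary so that $\delta^* < |\beta|/(8Y)$. Then I would invoke statements~2) and~3) of the same theorem to produce sets $S_2, S_3 \subset \mathbb{T}$ of measure at least $1 - \varepsilon$ each. By a union bound,
$$\lambda(S_2 \cap S_3) \;\geq\; 1 - \lambda(S_2^c) - \lambda(S_3^c) \;\geq\; 1 - 2\varepsilon.$$
This inclusion-exclusion observation is the one step not already made explicit inside the proof of the main theorem, and it is what licenses the particular measure parameter $1 - 2\varepsilon$ appearing in the definition of $N_d$.

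The only qualitative appeal to irrationality of $\varphi$ in the original argument was the choice of $N_d$ with the property that, for every $\theta \in \mathbb{T}$, there exists some $n \in \{1,\ldots,N_d\}$ with $\mathrm{dist}_{\mathbb{T}}(\theta + n\varphi,\, S_2 \cap S_3) < \delta^*/2$. By the very definition of $\mathcal{N}_\varphi$, once the target set has measure at least $1 - 2\varepsilon$, the choice
$$N_d \;:=\; \max\bigl(D_0 + 1,\; \mathcal{N}_\varphi(\delta^*/2,\, 1 - 2\varepsilon)\bigr)$$
is admissible (the $D_0 + 1$ clause coming from condition~3 of the main theorem, exactly as before). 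With $N_d$ so fixed, the height constraints \eqref{h_constraints} become the explicit formulas displayed in the statement, and $H \geq |x(1)|$ together with $H \geq \rho(\varepsilon)$ is what makes the approximation hypothesis (1) of Theorem~\ref{main_theorem} applicable at every point of the orbit that reaches a visiting moment.

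From here the remainder of the proof of Theorem~\ref{main_theorem} runs verbatim: for each visiting moment $n_0$ the argument between inequalities \eqref{approximation_of_absolute_value_change_2}--\eqref{approximation_of_re_sum_y} and the subsequent telescoping produce the estimate $|x(n)| \leq H + \tfrac{1}{2}(3N_d + 2)(F + Y)$, as required. Since the quantitative theorem only repackages quantities already constructed in the qualitative proof, there is no genuine obstacle; the only point requiring care is the verification that $\mathcal{N}_\varphi(\delta^*/2, 1 - 2\varepsilon)$ is indeed the right invariant to invoke, which is precisely what the union-bound computation above guarantees.
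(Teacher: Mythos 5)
Your proposal is correct and follows exactly the route the paper itself takes: the inclusion--exclusion bound $\lambda(S_2\cap S_3)\geq 1-2\varepsilon$, the resulting admissibility of $N_d=\max\bigl(D_0+1,\ \mathcal{N}_\varphi(\delta^*/2,\,1-2\varepsilon)\bigr)$, and the verbatim rerun of the main theorem's estimates to obtain $|x(n)|\leq H+\tfrac{1}{2}(3N_d+2)(F+Y)$. No discrepancies to report.
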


\section{Example of an application of quantitative version of the Main Theorem}

According to the condition 3 of theorem \ref{main_theorem} we see that the function $\Phi$ must satisfy $\int_{[0,1)}\Phi(s)ds < 0$, which seems to be quite restrictive. However, using the quantitative version of our theorem, we may deduce the boundedness of (possibly not all, but some) solutions of difference equation $x(n+1)=e(\varphi)x(n)+f(x(n))+y(n), n\geq 1$ in case  $\int_{[0,1)}\Phi(s)ds = 0$.

The example of such application of quantitative version of theorem \ref{main_theorem} follows.

\begin{theorem}\label{application_of_main_theorem}
    Let $\varphi \in [0,1)$ be irrational, bounded function $g : (0,+\infty)\to \mathbb{R}$ be such that $g(t)= t^{-\alpha}$ for all $t \geq M$, where $\alpha>0, M >0$ are some constants. Let function $f:\mathbb{C} \to \mathbb{C}$ be bounded and $\Re\left(f(z)\cdot e(-\varphi)\cdot \frac{\overline{z}}{|z|}\right) = -g(|z|)$ for all $z$ with $|z|>0$. In addition, let bounded sequence $\{y(n)\}_{n\geq1}$ be such that there exist some constants $K >0, \gamma>0$ so that for all $n_2 > n_1 \geq 1$ we have  
    
    $$\left\vert\sum\limits_{n=n_1}^{n_2-1}y(n)e(-n\varphi)\right\vert \leq K(n_2-n_1)^{1-\gamma}$$

    If $\alpha(1+\frac{1}{\gamma})<1$, then for all $x(1) \in \mathbb{C}$ the solution $\{x(n)\}_{n\geq 1}$ of the equation $x(n+1)=e(\varphi)x(n) + f(x(n))+y(n), n\geq 1$ is bounded.
\end{theorem}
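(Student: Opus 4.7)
The plan is to adapt the proof of theorem \ref{main_theorem} rather than invoke theorem \ref{quantitative_version_of_main_theorem} as a black box: the hypothesis on $f$ gives the exact identity $\Re(\overline{f(z)}\cdot e(\varphi+\arg^* z)) = -g(|z|)$ for $|z|>0$, so the limit function in condition 1 of theorem \ref{main_theorem} is $\Phi\equiv 0$, $\int\Phi\,d\lambda=0$, and condition 3 cannot be met with any $\beta<0$. Nevertheless $g(|z|)=|z|^{-\alpha}$ for $|z|\geq M$ still yields a genuine negative drift, and the key idea is to choose the block length $N$ as a function of the current height $H$, balancing the $f$-drift of size $\sim NH^{-\alpha}$ against the $y$-contribution of size $\sim KN^{1-\gamma}$.

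Set $F:=\sup|f|$, $Y:=\sup_{n}|y(n)|$, fix a large threshold $H$ (to be chosen) and set $N:=\lceil 2K^{1/\gamma}H^{\alpha/\gamma}\rceil$. Define visiting moments into $V:=(H,H+F+Y]$ exactly as in the proof of theorem \ref{main_theorem}. At a visiting moment $n_0$ for which $|x(k)|\geq H/2$ on $\{n_0,\dots,n_0+N\}$, set $n_1:=n_0$, $n_2:=n_0+N$ and estimate the block drift. Applying lemma \ref{absolute_value_change} step by step with $w_1=e(\varphi)x(k)$, $w_2=x(k+1)$ and using the exact identity above in place of the ergodic approximation of the main proof gives
\begin{align*}
|x(n_2)|-|x(n_1)| \;=\; -\sum_{k=n_1}^{n_2-1} g(|x(k)|) \;+\; \sum_{k=n_1}^{n_2-1}\Re\bigl(\overline{y(k)}\,e(\varphi+\arg^* x(k))\bigr) \;+\; R,
\end{align*}
with $|R|\leq 4(F+Y)^2N/H$. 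The first sum is at most $-N(H+(N+1)(F+Y))^{-\alpha}$. For the second, lemma \ref{lemma_estimation_rotation_caused_by_translation} yields $d_{\mathbb{T}}(\arg^* x(k),\,\arg^* x(n_1)+(k-n_1)\varphi)\leq 2(k-n_1)(F+Y)/H$; replacing $e(\varphi+\arg^* x(k))$ by $e(\varphi+\arg^* x(n_1)+(k-n_1)\varphi)$ turns this sum into $\Re(c\sum_k \overline{y(k)}\,e(k\varphi))$ with $|c|=1$, which is bounded by the hypothesis by $KN^{1-\gamma}$, at the cost of an accumulated geometric error of order $Y(F+Y)N^2/H$.

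Substituting $N\sim 2K^{1/\gamma}H^{\alpha/\gamma}$, the leading drift and the leading $y$-term are both of order $H^{\alpha/\gamma-\alpha}$ and combine, thanks to the choice $C=2K^{1/\gamma}$, to a strictly negative multiple of $H^{\alpha/\gamma-\alpha}$; the remaining error terms are $O(H^{2\alpha/\gamma-1})$ and $O(H^{\alpha/\gamma-1})$. The hypothesis $\alpha(1+1/\gamma)<1$ is exactly the inequality $2\alpha/\gamma-1<\alpha/\gamma-\alpha$ (and trivially $\alpha<1$), so both error exponents are strictly less than $\alpha/\gamma-\alpha$; consequently for all sufficiently large $H$ the block drift is strictly negative. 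Reproducing the concluding argument of theorem \ref{main_theorem} then yields $|x(n)|\leq H+\tfrac{1}{2}(3N+2)(F+Y)$ for all $n$, proving boundedness. The main obstacle is this careful bookkeeping at scale $H^{\alpha/\gamma-\alpha}$: one needs both error terms to be genuinely of lower order, and the condition $\alpha(1+1/\gamma)<1$ is precisely the threshold at which the "rotation error" of size $N^2/H$ just barely loses to the leading drift.
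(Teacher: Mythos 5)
Your argument is correct, and it reaches the result by a genuinely different route than the paper. The paper keeps the ergodic machinery: it truncates $g$ at the level $L(\beta_0)$ to produce a modified $\tilde f$ whose limit function is the strictly negative constant $\tilde\Phi\equiv -g(L(\beta_0))$, applies Theorem \ref{quantitative_version_of_main_theorem} to the modified equation with a small $\beta_0<0$ chosen so that $g(L(\beta_0))>2|\beta_0|$ (this is where $\alpha(1+\frac{1}{\gamma})<1$ enters, via $g(L(\beta))\asymp|\beta|^{\alpha(1+\frac{1}{\gamma})}$ versus $|\beta|$), and then observes that the resulting a priori bound $L(\beta_0)$ keeps the solution inside the region where $\tilde f=f$. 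You instead re-run the block-drift argument of Theorem \ref{main_theorem} directly with a height-dependent block length $N\asymp H^{\alpha/\gamma}$, exploiting the fact that for this $f$ the radial increment $-g(|x(k)|)$ is exact; as a result Theorem \ref{ergodic_properties} (Egorov, the maximal inequality, Birkhoff) and the alignment step from $n_0$ to a point near $S_2\cap S_3$ become unnecessary, and only Lemmas \ref{lemma_estimation_rotation_caused_by_translation} and \ref{absolute_value_change} survive. Your exponent bookkeeping is right and matches the paper's: the threshold $2\alpha/\gamma-1<\alpha/\gamma-\alpha$ is literally $\alpha(1+\frac{1}{\gamma})<1$, and the choice $N\asymp H^{\alpha/\gamma}$ is exactly the scaling hidden in the paper's $N_d(\beta)\asymp|\beta|^{-1/\gamma}$, $H(\beta)\asymp|\beta|^{-1-\frac{1}{\gamma}}$, $|\beta|\asymp H^{-\alpha}$. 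What your version buys is a self-contained, ergodicity-free proof for this class of $f$, which also makes transparent that the rotation error of size $N^2/H$ is the binding constraint; what the paper's version buys is reuse of Theorem \ref{quantitative_version_of_main_theorem} as a black box, which would also tolerate an angular dependence in $f$ that does not vanish at infinity. When writing yours up, make explicit the routine points you currently gloss over: the window hypothesis $|x(k)|\geq H/2$ must be strong enough for the lemmas and for $g(t)=t^{-\alpha}$ (i.e. $H/2\geq\max(2(F+Y),M)$), the upper bound $|x(k)|\leq H+(N+1)(F+Y)$ used in the drift estimate follows from the visiting-moment definition and the step size, and the drift estimate $-N(H+(N+1)(F+Y))^{-\alpha}\leq -\tfrac{1}{2}NH^{-\alpha}$ needs $N(F+Y)=o(H)$, i.e. $\alpha/\gamma<1$, which your hypothesis implies.
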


\vspace{10pt}

\begin{proof}
    In proof we will use the standard asymptotic notation. For functions $g_1, g_2$ and $t_0 \in [0,+\infty]$ the notation $g_1(t) \ll g_2(t)$ or $g_2(t) \gg g_1(t)$ as $t \to t_0$ means that there exists a constant $C >0$ such that $|g(t_1)|\leq C|g_2(t)|$ for all $t$, which are sufficiently close to a point $t_0$ (in case $t_0 = +\infty$ it means that for all sufficiently large $t$). Notation $g_1(t) \asymp g_2(t), t\to t_0$ is a shorthand for $g_1(t) \ll g_2(t) \ll g_1(t), t\to t_0$.
    
    As before,  we denote $F:= \sup_{z\in \mathbb{C}}|f(z)|, Y:=\sup_{n\geq 1}|y(n)|$ and note that $F, Y < +\infty$.
    Take an arbitrary $x(1) \in \mathbb{C}$. For $\beta < 0$ let 
    $\delta^*(\beta):= \frac{|\beta|}{16(Y+1)}$ and $\varepsilon(\beta):= \min(\frac{|\beta|}{32}, \frac{\delta^*(\beta)}{4})$. Observe that we have $\mathcal{N}_{\varphi}\left(\frac{\delta^*(\beta)}{2}, 1-2\varepsilon(\beta)\right)=1$ for every $\beta < 0$.
    
    It is straightforward to see that for every $\beta \in(-1, 0)$ there exists the constant $D(\beta) \in \mathbb{N}$ such that for all $n_2 > n_1 \geq 1$ with $n_2 - n_1 \geq D(\beta)$ the inequality  $\frac{1}{n_2-n_1}\left\vert\sum\limits_{n=n_1}^{n_2-1}y(n)e(-n\varphi)\right\vert +2\beta \leq \beta$ holds. For instance, we may take $D(\beta) := \floor*{\left(\frac{K}{|\beta|}\right)^{1/\gamma}}+1$, where $\floor{\ \cdot \ }$ is the floor function.

    $$\text{Let} \ \ N_d(\beta) :=\max{\left(D(\beta)+1, \ \mathcal{N}_\varphi\left(\frac{\delta^*(\beta)}{2}, 1-2\varepsilon(\beta)\right)\right)} = D(\beta)+1 \ \ \text{and}$$
    
    $$H(\beta):= \max\left(\frac{16(F+Y)^2}{|\beta|}, \ \frac{2(F+Y)N_d(\beta)}{\delta^*(\beta)}, \ |x(1)|\right)$$

    \noindent We observe that $N_d(\beta) \asymp |\beta|^{-1/\gamma}$ and $H(\beta) \asymp |\beta|^{-1-\frac{1}{\gamma}}$ as $\beta \to 0-$ and, thus, we have $L(\beta):= H(\beta)+\frac{1}{2}(3N_d(\beta)+2)(F+Y) \asymp |\beta|^{-1-\frac{1}{\gamma}}, \beta \to 0-$ .
    
    Therefore, $g(L(\beta)) \asymp |\beta|^{\alpha(1+\frac{1}{\gamma})}, \beta \to 0-$. Since $\alpha(1+\frac{1}{\gamma})<1$ by hypothesis of theorem, one can find $\beta_1 < 0$ with the absolute value small enough such that $-g(L(\beta)) < 2\beta$ for all $\beta \in [\beta_1,0)$.


    By Lagrange's mean value theorem for all $ \beta < 0$, which are sufficiently close to zero (so that $H(\beta), L(\beta)) \geq M$), there exists some $\xi_{\beta} \in [H(\beta), L(\beta)]$ such that 
    
    $$|g(H(\beta)) - g(L(\beta))|=|g^{\prime}(\xi_\beta)|\cdot|(H(\beta) - L(\beta)|$$

    \noindent Because $|g^\prime(\xi_\beta)|=|\alpha|\cdot |\xi_\beta|^{-1-\alpha} \leq |\alpha|\cdot H(\beta)^{-1-\alpha} \ll |\beta|^{(1+\alpha)(1+\frac{1}{\gamma})}$ and $L(\beta)-H(\beta) \asymp |\beta|^{-\frac{1}{\gamma}}$ as $\beta \to 0-$, by the previous identity one obtains the estimate:

    $$|g(H(\beta)) - g(L(\beta))| \ll |\beta|^{(1+\alpha)(1+\frac{1}{\gamma}) - \frac{1}{\gamma}} = |\beta|^{1+\alpha(1+\frac{1}{\gamma})}, \beta \to 0-$$

    Since $1+\alpha(1+\frac{1}{\gamma}) > 1$ and $\varepsilon(\beta) \asymp |\beta|, \beta \to 0-$, we see that there exists some $\beta_{2} \in (-1,0)$ such that for all $\beta \in [\beta_2,0)$ and all $t \in [H(\beta), L(\beta)]$ one has $|g(t)-g(L(\beta))|\leq |g(H(\beta)) - g(L(\beta))|< \varepsilon(\beta)$.

    We set $\beta_0 := \max(\beta_1, \beta_2)$ and define the function $\tilde{g}:(0,+\infty)\to (0,+\infty)$ by $\tilde{g}(t):=g(t), \ \text{if $t < L(\beta_0)$}$ and $\tilde{g}(t):=g(L(\beta_0)), \ \text{if $t\geq L(\beta)$}$. Now we define the function $\tilde{f}:\mathbb{C}\to\mathbb{C}$ in a following way. For all $z \in \mathbb{C}, z\neq 0$ let
    
    $$\Re\left(\tilde{f}(z)\cdot e(-\varphi)\cdot \frac{\overline{z}}{|z|}\right) := -\tilde{g}(|z|) \ \ \text{and} \ \ \Im\left(\tilde{f}(z)\cdot e(-\varphi)\cdot \frac{\overline{z}}{|z|}\right) := \Im\left(f(z)\cdot e(-\varphi)\cdot \frac{\overline{z}}{|z|}\right)$$ 

    \noindent and let $\tilde{f}(0):=f(0)$.

    Consider the difference equation

    \begin{equation}\label{tilde_equation}
        \tilde{x}(n+1)=e(\varphi)\tilde{x}(n)+\tilde{f}(\tilde{x}(n))+y(n), n\geq 1, \tilde{x}(1)=x(1)
    \end{equation}

    \noindent It is easy to see that

    $$\sup_{\theta \in [0,1)} \left \vert \tilde{\Phi}(\theta)-\Re \left(\overline{\tilde{f}(\rho \, e(\theta))} \; \cdot e(\varphi + \theta)\right)\right\vert \to 0, \rho \to +\infty$$

    \noindent with $\tilde{\Phi}:[0,1)\to\mathbb{R}$ being a constant function $\tilde{\Phi} \equiv -g(L(\beta_0))$.

    Let

    $$\tilde{H}(\beta_0):= \max\left(\frac{16(F+Y)^2}{|\beta_0|}, \ \frac{2(F+Y)N_d(\beta)}{\delta^*(\beta_0)}, \ |x(1)|, \rho(\varepsilon(\beta_0))\right)=\max(H(\beta_0), \rho(\varepsilon(\beta_0))),$$

    \noindent where $\rho(\varepsilon(\beta_0))$ is an arbitrary real number such that for every $\rho \geq \rho(\varepsilon(\beta_0))$ the inequality $\sup_{\theta \in [0,1)} \left \vert \tilde{\Phi}(\theta)-\Re \left(\overline{\tilde{f}(\rho \, e(\theta))} \; \cdot e(\varphi + \theta)\right)\right\vert \leq \varepsilon(\beta_0)$ holds. 

    By the construction of $\tilde{f}$ and $\beta_0$ we may set $\rho(\varepsilon(\beta_0)):=\tilde{H}(\beta_0)$ and obtain that $\tilde{H}(\beta_0)=H(\beta_0)$. In addition, observe that for all $n_2 > n_1\geq 1$ with $n_2-n_1 \geq D(\beta_0)$ we have 

    $$\frac{1}{n_2-n_1}\left\vert\sum_{n=n_1}^{n_2-1} y(n)e(-n\varphi) \right\vert + \int_{[0,1)}\tilde{\Phi}(s)ds < \beta_0$$

    \noindent since $\int_{[0,1)}\tilde{\Phi}(s)ds = -g(L(\beta_0)) < -2\beta_0$.

    We apply the theorem \ref{quantitative_version_of_main_theorem} to the difference equation \eqref{tilde_equation} with 
    $\beta:=\beta_0$, $\varepsilon:= \varepsilon(\beta_0)$, $\delta^* := \delta^{*}(\beta_0)$, $\Phi := \tilde{\Phi}$, $\rho(\varepsilon):=\rho(\varepsilon(\beta_0))$, $D_0:=D(\beta_0)$, $N_d:=N_d(\beta_0)$ and deduce that for $x(1)$, chosen at the beginning of the proof, we have the estimate $|\tilde{x}(n)|\leq L(\beta_0), n\geq 1$. By the construction of $\tilde{f}$ we conclude that the solution $\{x(n)\}_{n\geq 1}$ of the difference equation $x(n+1)=e(\varphi)x(n)+f(x(n))+y(n), n\geq 1$ coincides with the solution $\{\tilde{x}(n)\}_{n\geq1}$ of the difference equation \eqref{tilde_equation}, that is, $x(n)=\tilde{x}(n), n\geq1$. Therefore, the solution $\{x(n)\}_{n\geq 1}$ is bounded.
\end{proof}

\begin{example}
    Suppose that $\varphi$ is irrational, $y(n):=0$ and $f:\mathbb{C} \to \mathbb{C}$ is the same as in the theorem \ref{application_of_main_theorem}, then this theorem tell us that for all $\gamma > 0$ and all $\alpha > 0$ with $\alpha(1+\frac{1}{\gamma}) < 1$ every solution of the difference equation $x(n+1)=e(\varphi)x(n)+f(x(n)), n\geq 1$ is bounded. Since we can take $\gamma > 0$ to be arbitrary large, it implies that this equation has all the solutions bounded for every $\alpha \in (0,1)$. 
\end{example}

\section{Discussion of the conditions of theorem \ref{main_theorem}}\label{sec:discussion_of_conditions_of_main_theorem}

In this section we would like to discuss the importance of the conditions of theorem \ref{main_theorem}. Before proceeding to  construction of the corresponding examples, we recall that by induction one can easily see that solution $\{x(n)\}_{n\geq1}$ of the difference equation $x(n+1)=e(\varphi)x(n)+f(x(n))+y(n), n\geq 1$ satisfies the following recurrence relation:

\begin{equation}\label{solution_recurrence_relation}
    x(n)=e(n\varphi)x(1) + e(n\varphi)\sum_{j=1}^{n-1}f(x(j))e(-j\varphi) + e(n\varphi)\sum_{j=1}^{n-1}y(j)e(-j\varphi), n\geq 2
\end{equation}

\subsection{Uniform convergence in the condition 1 of theorem \ref{main_theorem} can't be replaced with the pointwise convergence}

Let $\varphi \in [0,1)$ be an arbitrary irrational number. We can write $\mathbb{T}$ as the disjoint union of orbits $\mathcal{O}_\tau:=\{\tau+n\varphi : n \in \mathbb{Z}\}$, that is, $\mathbb{T} = \bigcup_{\tau} \mathcal{O}_\tau$ for orbits $\mathcal{O}_{\tau}$ with pairwise empty intersections. We construct a function $f$ by defining it on each orbit $\mathcal{O}_{\tau}$ in the following way.

$f(0):=1$. If $z \neq 0$, then there exists some unique $\tau$ such that $\arg^{*}z \in \mathcal{O}_{\tau}$, that is, $\arg^*z = \tau + n\varphi$ for some unique $n \in \mathbb{Z}$ and $\tau$. We set $f(z):=\frac{z}{|z|}e(\varphi)$ if $|z| \leq n^2$ and set $f(z):=-\frac{z}{|z|}e(\varphi)$ if $|z| > n^2$. It is easy to see that $f:\mathbb{C} \to \mathbb{C}$ is bounded and for every $\theta \in [0,1)$ we have:

$$\left \vert \Phi(\theta)-\Re \left(\overline{f(\rho \, e(\theta))} \; \cdot e(\varphi + \theta)\right)\right\vert \to 0, \; \rho \to +\infty$$

\noindent with $\Phi \equiv -1$. This convergence is evidently not uniform in $\theta \in [0,1)$. Also we take $\{y(n)\}_{n\geq1}:=\{0\}_{n\geq 1}$ to be a zero sequence.

Observe that all the conditions of theorem \ref{main_theorem}, except condition 1, are satisfied. We claim that for all $x(1) \in \mathbb{C}$ the solution $\{x(n)\}_{n\geq 1}$ of the equation $x(n+1) = e(\varphi)x(n)+f(x(n)) + y(n), n\geq 1$ is unbounded. 

Indeed, since always $x(2) \neq 0$, then without loss of generality we may assume that $x(1)\neq 0$. It is obvious from \eqref{solution_recurrence_relation} that 

$$\arg^*x(n) = \arg^{*}x(1) + (n-1)\varphi, n\geq 1$$ 

\noindent Hence, all the $\arg^*x(n)$ belong to the same orbit $\{\tau_0 + n \varphi : n \in \mathbb{Z}\}$ for some unique $\tau_0$. Thus, there exists some $n_0 \in \mathbb{Z}$ such that 

$$\arg^*x(n) = \tau_0 + (n_0+n)\varphi, n\geq 1$$ 
\noindent From \eqref{solution_recurrence_relation} and the construction of $f$ observe that:

\begin{equation}\label{counterexample_1_abs_value_of_solution}
    |x(n)| = |x(1)|+ \sum_{k=1}^{n-1}f(x(k)) = |x(1)|+\sum_{k=1}^{n-1}\left[\mathbbm{1}(|x(k)|\leq k^2) - \mathbbm{1}(|x(k)|>k^2\right]
\end{equation}

Because $|x(n+1)|-|x(n)| \in \{-1,1\}$ we observe that $|x(n)|\leq|x(1)|+n$. Therefore, for all $n$ large enough we have $|x(n)|\leq n^2$. Since 

$$\arg^*x(n) = \tau_0 + (n_0+n)\varphi, n\geq 1$$
 
\noindent we get $f(x(n)) = \frac{x(n)}{|x(n)|}e(\varphi)$ for all $n$ large enough. We conclude that there exists some integer $k_0$ such that for all $k \geq k_0$ we have $f(x(k)) = \frac{x(k)}{|x(k)|}e(\varphi)$ and, thus, from \eqref{counterexample_1_abs_value_of_solution} it follows that $|x(n)|\geq |x(1)|+n-2k_0, n\geq 1$ which implies that solution $\{x(n)\}_{n\geq 1}$ is unbounded.

\subsection{Riemann integrability of the limit function $\Phi$ can't be replaced with Lebesgue integrability}

Let $\varphi \in [0,1)$ be an arbitrary irrational number and as in the previous subsection we take $\{y(n)\}_{n\geq1}:=\{0\}_{n\geq 1}$ to be a zero sequence.

Every real $t \ \text{mod} \ 1$ can be represented as the decimal fraction $t=\overline{0.d_1 d_2 d_3 d_4 \ldots} \ \text{mod} \ 1$. For each positive integer $N$ we define the function $F_N:\mathbb{T} \to \mathbb{Q}$ as $F_N(t):=\overline{0.d_1 d_2\ldots d_N}, t=\overline{0.d_1d_2d_3\ldots} \in \mathbb{T}$. We denote $\varphi\mathbb{Q} := \{q\varphi \ \ \text{mod} \ 1 : q \in \mathbb{Q}\}$ and define the \text{"warped"} versions of functions $F_N$ namely the functions $F^{(\varphi)}_{N} : \mathbb{T} \to \varphi\mathbb{Q}$. $F^{(\varphi)}_{N}(t):=\varphi \cdot F_{N}(t/\varphi) \; \text{mod} \ 1$. Observe that for all $N$ we have

$$|t-F_N^{(\varphi)}(t)|= \varphi \cdot \left\vert\frac{t}{\varphi}-F_N\left(\frac{t}{\varphi}\right)\right\vert \leq \varphi \cdot 10^{-N+1}$$

\noindent and, thus, $\sup_{t \in \mathbb{T}}|t-F_N^{(\varphi)}(t)| \to 0, N \to \infty$.

Now we construct the function $f:\mathbb{C}\to \mathbb{C}$ as follows.

\begin{equation*}
    f(z):=\left\{
    \begin{array}{@{}rl@{}}
         1 & \text{if $z=0$} \\
         \frac{z}{|z|}e(\varphi) &  \text{if $0< |z| \leq 1$}\\
         \left\{-\frac{z}{|z|} + (|z|-1)\left[e\left(F_N^{(\varphi)}(\arg^{*}z)\right) - \frac{z}{|z|}\right]\right\} e(\varphi) & \text{if $\arg^*z \notin \varphi\mathbb{Q}$ and $|z| \in (N,N+1]$} \\ 
         & \text{for some $N \in \mathbb{N}$} \\

         \frac{z}{|z|}e(\varphi) & \text{if $\arg^*z \in \varphi\mathbb{Q}$}
    \end{array}
    \right .
\end{equation*}

If $z \in \mathbb{C}$ with $\arg^*z \in \varphi\mathbb{Q}$ or $0<|z| \leq 1$, then we see that $\arg^{*}(e(\varphi)z)=\arg^*{[e(\varphi)z + f(z)}]$ and $|e(\varphi)z)+f(z)|=|e(\varphi)z)|+1=|z|+1$.

If $z \in \mathbb{C}$ with both $\arg^*z \notin \varphi\mathbb{Q}$ and $|z| \in (N, N+1]$ for some $N \in \mathbb{N}$, then 

$$e(\varphi)z+f(z)=\left\{z  -\frac{z}{|z|} + (|z|-1)\left[e\left(F_N^{(\varphi)}(\arg^{*}z)\right) - \frac{z}{|z|}\right] \right\}e(\varphi) = $$

$$ = (|z|-1)\left\{ \frac{z}{|z|}+ \left[e\left(F_N^{(\varphi)}(\arg^{*}z)\right) - \frac{z}{|z|}\right] \right\}e(\varphi) = $$

$$ = (|z|-1) \cdot e\left(F_N^{(\varphi)}(\arg^{*}z)\right) \cdot  e(\varphi)$$

\noindent Thus, we conclude that $\arg^{*}(e(\varphi)z+f(z))=F_N^{(\varphi)}(\arg^{*}z) + \varphi \in \varphi\mathbb{Q}$ and $|e(\varphi)z+f(z)|=|z|-1$ in case $\arg^*z \notin \varphi\mathbb{Q}$, $|z| \in (N, N+1]$ for some $N \in \mathbb{N}$.

Now it is easy to see from \eqref{solution_recurrence_relation} that solution $\{x(n)\}_{n\geq1}$ of the equation $x(n+1)=e(\varphi)x(n)+f(x(n)) + y(n), n\geq1$ satisfies  $|x(n)|\geq n-2, n\geq 1$ for every $x(1) \in \mathbb{C}$, hence, all the solutions $\{x(n)\}_{n\geq1}$ are unbounded.

Since 

$$\left\vert e\left(F_N^{(\varphi)}(\arg^{*}z)\right) - \frac{z}{|z|} \right\vert = \left\vert e\left(F_N^{(\varphi)}(\arg^{*}z)\right) - e(\arg^{*}z) \right\vert \leq |F_N^{(\varphi)}(\arg^{*}z) - \arg^{*}z| \leq \varphi\cdot 10^{-N+1}$$

\noindent we see that

$$\sup_{\theta \in [0,1)} \left \vert \Phi(\theta)-\Re \left(\overline{f(\rho \, e(\theta))} \; \cdot e(\varphi + \theta)\right)\right\vert \to 0, \; \rho \to +\infty$$

\noindent with the limit function $\Phi : [0,1)\to \mathbb{R}$ satisfying $\Phi(\theta)=1$ if $\theta \in \varphi\mathbb{Q}$ and $\Phi(\theta)=-1$ if $\theta \notin \varphi\mathbb{Q}$. Set $\varphi\mathbb{Q} \subset \mathbb{T}$ has the Lebesgue measure zero, hence, $\Phi$ is Lebesgue integrable with $ \int_{[0,1)}\Phi(\theta)d\theta = -1$. On the other hand, evidently, $\Phi$ is not Riemann integrable.

All the conditions of theorem \ref{main_theorem}, except Riemann integrability of $\Phi$, are satisfied, however, as we have shown, all the solutions of the equation $x(n+1)=e(\varphi)x(n)+f(x(n)) + y(n), n\geq1$ are unbounded.

\subsection{In the condition 3 of theorem \ref{main_theorem} we can't take $\beta=0$}

In this subsection we give a construction, which shows that condition 3 of theorem \ref{main_theorem} can't be replaced with the following:

\vspace{10pt}

\noindent \text{$3^{\prime}.$} There exists some constant $D_0 \in \mathbb{N}$ such that for all $n_2>n_1\geq 1$ with $n_2 - n_1 \geq D_0$ we have 
$$\frac{1}{n_2 - n_1}\left\vert\sum_{n=n_1}^{n_2 - 1}y(n)e(-n\varphi)\right\vert + \int_{[0,1)} \Phi(s) ds < 0 $$ 

\vspace{10pt}

Let $\varphi \in [0,1)$ be an arbitrary irrational and let $h:(0,+\infty) \to (1, +\infty)$ be a strictly increasing function such that $h(t)$ diverges to $+\infty$ arbitrarily fast as $t \to +\infty$. Define the function $f:\mathbb{C} \to \mathbb{C}$ as follows: 

\begin{equation*}
    f(z):=\left\{
    \begin{array}{@{}rl@{}}
         0 & \text{if $z=0$} \\
         
        -\frac{z}{|z|}\cdot(1-\frac{1}{h(|z|)})e(\varphi) & \text{if $|z|\neq 0$}
         
    \end{array}
    \right .
\end{equation*}

\noindent and define the sequence $\{y(n)\}_{n\geq 1}:=\left\{\left(1-\frac{1}{n^2}\right)e(n\varphi)\right\}_{n\geq1}$. 

Obviously $f$ and $\{y(n)\}_{n\geq1}$ are bounded and observe that 
$$\sup_{\theta \in [0,1)} \left \vert \Phi(\theta)-\Re \left(\overline{f(\rho \, e(\theta))} \; \cdot e(\varphi + \theta)\right)\right\vert \to 0, \; \rho \to +\infty$$

\noindent with $\Phi \equiv -1$. It is easy to see that for all $n_2 \geq n_1 \geq 1$ we have

$$\frac{1}{n_2 - n_1}\left\vert\sum_{n=n_1}^{n_2 - 1}y(n)e(-n\varphi)\right\vert + \int_{[0,1)}\Phi(s)ds < 0$$

We claim that for every $x(1) \in \mathbb{C}$ the solution $\{x(n)\}_{n\geq1}$ of the equation $x(n+1)=e(\varphi)x(n) + f(x(n))+y(n), n\geq 1$ is unbounded. Take an arbitrary $x(1) \in \mathbb{C}$ and suppose for the sake of contradiction that there exists some $L > 0$ such that $|x(n)|\leq L \text{ for all }n\geq1$. Apply reverse triangle inequality to \eqref{solution_recurrence_relation} and obtain the following estimate

$$|x(n)|\geq \left\vert e(n\varphi)\sum_{j=1}^{n-1}y(j)e(-j\varphi) \right\vert - \left\vert e(n\varphi)x(1) + e(n\varphi)\sum_{j=1}^{n-1}f(x(j))e(-j\varphi) \right\vert = $$

$$ = \left\vert \sum_{j=1}^{n-1}y(j)e(-j\varphi) \right\vert - \left\vert x(1) + \sum_{j=1}^{n-1}f(x(j))e(-j\varphi) \right\vert$$

\noindent now apply the triangle inequality to the second term in the difference above and obtain

$$|x(n)|\geq  \left\vert \sum_{j=1}^{n-1}y(j)e(-j\varphi) \right\vert - |x(1)|-\sum_{j=1}^{n-1}|f(x(j))| $$

\noindent Since $|x(n)|\leq L, n\geq1$ by assumption, we have $|f(x(n))| \leq 1-\frac{1}{h(L)}, n\geq 1$. Therefore,

$$|x(n)| \geq \left\vert \sum_{j=1}^{n-1}y(j)e(-j\varphi) \right\vert - |x(1)|- (n-1) \left(1-\frac{1}{h(L)} \right) = $$

$$ = \left\vert \sum_{j=1}^{n-1}\left(1-\frac{1}{j^2}\right) \right\vert - |x(1)|- (n-1) \left(1-\frac{1}{h(L)} \right) = $$

$$ = (n-1) - \sum_{j=1}^{n-1}\frac{1}{j^2} - |x(1)| - (n-1)\left(1-\frac{1}{h(L)} \right) = $$

$$ = (n-1)\cdot\frac{1}{h(L)} - |x(1)| - \sum_{j=1}^{n-1}\frac{1}{j^2}$$

\noindent Since $\sum_{j=1}^{n-1}\frac{1}{j^2} \leq \frac{\pi^2}{6}$ we conclude that for all $n\geq 1$ we have

$$|x(n)|\geq (n-1)\cdot\frac{1}{h(L)} - |x(1)|-\frac{\pi^2}{6}$$

\noindent which contradicts our initial assumption $|x(n)|\leq L, n\geq1$. Therefore, all the solutions of the equation $x(n+1)=e(\varphi)x(n) + f(x(n))+y(n), n\geq 1$ are unbounded.

\vspace{10pt}

\noindent \textbf{Comment.} We would like to remark that in our construction we have 

$$\sup_{\theta \in [0,1)} \left \vert \Phi(\theta)-\Re \left(\overline{f(\rho \, e(\theta))} \; \cdot e(\varphi + \theta)\right)\right\vert = \frac{1}{h(\rho)} \ \text{ for }\rho > 0$$

\noindent It shows that all the solutions of the difference equation $x(n+1)=e(\varphi)x(n) + f(x(n))+y(n), n\geq 1$ can be unbounded no matter how fast the uniform convergence in the condition 1 of theorem \ref{main_theorem} is.

\begin{tabular}{@{}l@{}}%
    A. Chaikovskyi \\
    \textsc{Taras Shevchenko National University of Kyiv, Ukraine}\\
    \textit{E-mail address}: \texttt{andriichaikovskyi@knu.ua}
\end{tabular}

\vspace{10pt}

\begin{tabular}{@{}l@{}}%
    O. Liubimov \\
    \textsc{Taras Shevchenko National University of Kyiv, Ukraine}\\
    \textit{E-mail address}: \texttt{liubimov\_oleksandr@knu.ua}
\end{tabular}


\begin{thebibliography}{99}

\bibitem{Agarwal_Cuevas_Lizama_regularity_difference_eq_banach_spaces}
    Ravi P. Agarwal, Claudio Cuevas, Carlos Lizama, Regularity of Difference Equations on Banach Spaces, Springer International Publishing Switzerland, 2014 \\
    DOI 10.1007/978-3-319-06447-5

\bibitem{Einsiedler_Ward_ergodic_theory}
  Manfred Einsiedler, Thomas Ward,
  Ergodic Theory with a view towards Number Theory,
  Springer, London, 2011.\\
  DOI 10.1007/978-0-85729-021-2 

\bibitem{Kuipers_Niederreiter_Uniform_Distribution}
    L. Kuipers, H. Niederreiter,
    Uniform Distribution of Sequences,
    JOHN WILEY $\&$ SONS, 1974 \\
    ISBN 0-471-51045-9

\bibitem{Raffoul_volterra_difference_equations}
    Youssef N. Raffoul, Qualitative Theory of Volterra Difference Equations, Springer Nature Switzerland AG, 2018 \\
    DOI 10.1007/978-3-319-97190-2
    
\bibitem{Rudin_real_complex_analysis}
    Walter Rudin,
    Real and Complex Analysis,
    Third edition,
    McGRAW-HILL INTERNATIONALS EDITIONS \\
    ISBN 0-07-054234-1
\end{thebibliography}
\end{document}